\newtheorem{theorem}{Theorem}[section]
\newtheorem{corollary}[theorem]{Corollary}
\newtheorem{proposition}[theorem]{Proposition}
\theoremstyle{definition}
\newtheorem{example}[theorem]{Example}
\newtheorem*{conjecture}{Conjecture}
\numberwithin{equation}{section}
\theoremstyle{remark}
\newtheorem{remark}[theorem]{Remark}
\begin{document}
\title{Maximal lexicographic spectra and ranks for states with fixed uniform margins}

\author{Xin Li}
\address{Department of Mathematics, Zhejiang University of Technology, Hangzhou 310023, P. R. China}
\email{xinli1019@126.com}

\thanks{The research is supported by National Natural Science Foundation of China (Grant No.11626211).}

\subjclass[2010]{Primary 20C30; Secondary 15A18}
\keywords{Quantum marginal problem, Maximal lexicographic spectrum, Rank, Rectangular Kronecker coefficients, Generalized discrete Weyl operators}


\begin{abstract}

We find the spectrum in maximal lexicographic order for quantum states $\rho_{AB}\in\mathcal{H}_A\otimes \mathcal{H}_B$
with margins $\rho_A=\frac{1}{n}I_n$ and $\rho_B=\frac{1}{m}I_m$ and discuss the construction of $\rho_{AB}$. By nonzero rectangular Kronecker coefficients, we give counterexamples for Klyachko's conjecture which says that a quantum state with maximal lexicographical spectrum has minimal rank among all states with given margins. Moreover, we show that  quantum states with the maximal lexicographical spectrum are extreme points.

%
\end{abstract}

\maketitle

\section{Introduction}



The quantum marginal problem is about relations between spectrum of mixed
state $\rho_{AB}$ of two (or multi) component system $\mathcal{H}_{AB}=\mathcal{H}_A\otimes \mathcal{H}_B$ and that of reduced
states $\rho_{A}$ and $\rho_{B}$ \cite{Chris05,Daft,Kly04,Kly06}. As margins of a pure state are isospectral, for $Spec~\rho_A \neq Spec~\rho_B$ state $\rho_{AB}$ can't be pure. It is interesting to measure the closeness between $\rho_{AB}$ and the pure states.
A state $\rho$ is pure if and only if its maximal eigenvalue is equal to
one. Hence the maximal eigenvalue may be considered as a measure of purity. On the other hand, a state $\rho$ is pure if and only if its rank equals to one. So pure states can be also characterized by their rank.
In \cite[Sec.6.4]{Kly04}, Klyachko raised the following conjecture:
\begin{conjecture}
State $\rho_{AB}$ with maximal lexicographical spectrum has minimal
rank among all states with given margins $\rho_A$, $\rho_B$.
\end{conjecture}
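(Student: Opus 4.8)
I would first attempt a direct proof by induction on $\min(n,m)$, peeling off the top eigenvalue. Assume $n\le m$ and write a state with maximal lexicographic spectrum as $\rho_{AB}=\lambda_1|\Psi\rangle\langle\Psi|+(1-\lambda_1)\sigma$ with $|\Psi\rangle$ a unit eigenvector for the largest eigenvalue. Taking partial traces and using $\rho_B=\frac1m I_m$, the Schmidt coefficients $(s_i)$ of $|\Psi\rangle$ must satisfy $\lambda_1 s_i\le\frac1m$ for every $i$; maximizing $\lambda_1$ therefore forces $(s_i)$ to be as flat as the Schmidt rank permits, pins $\lambda_1=\frac nm$, and then the two margin equations squeeze $\sigma$ into an orthogonal block $\mathcal H_A\otimes\mathcal H_{B'}$ with $\dim\mathcal H_{B'}=m-n$ on which $\sigma$ again has uniform margins $\frac1n I_n$ and $\frac1{m-n}I_{m-n}$. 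Iterating this is the slow (subtractive) Euclidean algorithm on $\{n,m\}$, stopping when the two dimensions coincide, where a maximally entangled pure state becomes available. If one could further show that a minimal-rank state for the original margins arises by adjoining $|\Psi\rangle\langle\Psi|$ to a minimal-rank state of the reduced problem, with ranks adding, the conjecture would follow.

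The additivity-and-rigidity step is exactly where I expect the argument to break, and in fact to be false. The key elementary observation is that maximality of the spectrum \emph{fixes the rank}: the rank of a density matrix equals the number of its nonzero eigenvalues, so every state with spectrum $\lambda^{\max}$ has rank exactly $r_0:=\#\{i:\lambda_i^{\max}>0\}$. Hence the conjecture becomes the numerical assertion $r_0=\min\{\operatorname{rank}\rho_{AB}:\rho_A=\tfrac1n I_n,\ \rho_B=\tfrac1m I_m\}$. The peeling recursion pins down $r_0$, but there is a cheaper way to realize $\frac1m I_m$ on the $B$-side: tile it by the $B$-marginals of several low-Schmidt-rank pure states whose $A$-marginals average to $\frac1n I_n$, the generalized discrete Weyl operators providing the bookkeeping. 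Already for $(n,m)=(2,3)$ this gives a rank-$2$ state with uniform margins (eigenvalues $\tfrac12,\tfrac12$), whereas the recursion yields $\lambda^{\max}=(\tfrac23,\tfrac16,\tfrac16)$, so $r_0=3$ — a counterexample. The plan is therefore to abandon the proof and instead (i) establish the closed form of $\lambda^{\max}$, equivalently locate the governing vertex of the moment polytope and confirm it is attained via nonvanishing of the corresponding stretched rectangular Kronecker coefficient (so that the hypothesis is not vacuous); and (ii) for an infinite family of pairs with $n\nmid m$, exhibit explicit states of rank strictly below $r_0$, produced systematically from nonzero rectangular Kronecker coefficients $g(n^k,m^\ell,\mu)$ with $\ell(\mu)<r_0$.

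The hard part is (i): determining $\lambda^{\max}$ for all $(n,m)$ and proving optimality at \emph{every} level of the lexicographic order, not just for $\lambda_1$. Bounding $\lambda_1$ is easy, but the recursion needs genuine justification — one must show that the block decomposition forced by $\lambda_1=\lambda_1^{\max}$ is rigid, that the residual problem is again one of uniform margins on the orthogonal complement, and that no global rearrangement beats the greedy choice; here I expect to invoke the quantum Horn inequalities, or directly the positivity of rectangular Kronecker coefficients. Once $\lambda^{\max}$ and $r_0$ are known, the refuting constructions and their margin checks are comparatively routine, and the extreme-point property of a maximizer $\rho^{\max}$ follows by exhibiting a Hermitian $H$ with sufficiently rapidly decaying spectrum for which $\rho^{\max}$ is the unique maximizer of the linear functional $\rho_{AB}\mapsto\operatorname{Tr}(\rho_{AB}H)$ on the compact convex set of states with the prescribed margins.
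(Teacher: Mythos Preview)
Your proposal is correct: the conjecture is false, and your overall strategy --- determine $\lambda^{\max}$ for uniform margins, then exhibit shorter partitions $\gamma$ with $g((a^n),(b^m);\gamma)\neq 0$ --- is exactly the paper's. Your subtractive-Euclidean recursion on $(n,m)$ is precisely the Clausen--Meier/Vallejo ``partition of strip type'' construction specialized to rectangular $\lambda,\mu$: at each step $\lambda(i)\cap\mu(i)$ peels off a rectangle and the residual pair $(\lambda(i+1),\mu(i+1))$ is again rectangular with dimensions governed by the slow Euclidean algorithm, so the paper's uniqueness proposition and Theorem~\ref{thm-maxlex} are the rigorous version of your peeling argument, with optimality at \emph{every} lexicographic level supplied by the Clausen--Meier theorem rather than quantum Horn inequalities. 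Your $(2,3)$ counterexample is the paper's first family $(2,m)$, $m$ odd, at $m=3$; the paper's second family $(n,n+1)$ shows the gap $r_0-r_{\min}=n-1$ is unbounded, matching your prediction that $n\nmid m$ is where the conjecture fails. The one genuine methodological difference is extremity: the paper gives a short direct argument via Schur's theorem (diagonalize $\rho$, apply $D(U\sigma U^*)\unlhd\operatorname{Spec}\sigma$ to each summand, then squeeze using lexicographic maximality to force $\sigma=\tau=\rho$), which is cleaner than your linear-functional idea --- the latter needs care when $\lambda^{\max}$ has repeated entries, since a single Hermitian $H$ need not isolate a unique maximizer among states sharing the maximal spectrum.
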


Let $\mathcal{C}(\frac{1}{n}I_n,\frac{1}{m}I_m)$ denote the convex set of states with margins $\frac{1}{n}I_n$, $\frac{1}{m}I_m$, where $I_n$, $I_m$ are identity matrices of size $n$ and $m$. Since the spectra of $\frac{1}{n}I_n$ and $\frac{1}{m}I_m$ are uniform probability distributions, we call them uniform margins. Motivated by Klyachko's conjecture, in this paper we study the maximal lexicographic spectrum and ranks of states in $\mathcal{C}(\frac{1}{n}I_n,\frac{1}{m}I_m)$. We give counterexamples for Klyachko's  conjecture, and show that there exist states which have the maximal lexicographic spectrum, but they don't have the minimal rank.
Moreover, we discuss how to construct the states in  $\mathcal{C}(\frac{1}{n}I_n,\frac{1}{m}I_m)$ with prescribed ranks, which generalize the construction in \cite{BCI11}. Our discussion is based on the correspondence between Kronecker coefficients and the spectra of density operators \cite{Chris05,Chris06,Chris07, Kly04}.

The paper is organized as follows. In Section 2 we give the  definitions and results used in the paper.  In Section 3 we construct the maximal lexicographic spectrum of states in $\mathcal{C}(\frac{1}{n}I_n,\frac{1}{m}I_m)$. We provide two classes of counterexamples for Klyachko's conjecture and show that states with the maximal lexicographic spectrum are extreme points. In Section 4 we give the construction of states with prescribed ranks in $\mathcal{C}(\frac{1}{n}I_n,\frac{1}{m}I_m)$.

\section{Preliminaries}\label{se:preli}

\subsection{Partitions and Kronecker coefficients}
A partition $\lambda$ of $n\in \mathbb{N}$ is a monotonically decreasing sequence $\lambda=(\lambda_1,\lambda_2, \ldots,\lambda_k) $ of natural
numbers such that $\sum_{i=1}^k\lambda_i=n$ and denoted by $\lambda\vdash n$. The length $l(\lambda)$ of $\lambda$ is defined as the
number of its nonzero parts and its size as $|\lambda|:=\sum_{i=1}^k \lambda_i$. If $\lambda_1=\lambda_2= \cdots=\lambda_k$, we call $\lambda$ a rectangular partition. The normalization $ \bar{\lambda}:= \lambda/n = (\lambda_1/n,\lambda_2/n, \ldots,\lambda_k/n)$ defines a probability distribution on $\mathbb{N}$. The Young diagram of a partition $\lambda$ is a top-aligned and left-aligned array of boxes such that in row $i$ we have $\lambda_i$ boxes. If we transpose a Young diagram at the main diagonal we obtain another Young diagram, the corresponding partition is denoted by $\lambda^t$. For  $\ell\in \mathbb{N}$, we let $\ell\lambda$ stand
for the partition arising by multiplying all components of $\lambda$ by $\ell$. If $\mu= (\mu_1, \mu_2. . . )$ is another partition, we denote by $\lambda\cap\mu=(\min\{\lambda_1,\mu_1\},\min\{\lambda_2,\mu_2\},...)$ which is also a partition.

Let  $\chi^\lambda$, $\chi^\mu$ denote the complex irreducible characters of the symmetric group $S_n$ corresponding to the partitions $\lambda,\mu$ of $n$. Their Kronecker product $\chi^\lambda\otimes\chi^\mu$ is also a
character of $S_n$. The Kronecker coefficient $g(\lambda,\mu;\nu)$ associated with three partitions $\lambda,\mu,\nu$ of $n$ is defined as the multiplicity  of $\chi^\nu$ in $\chi^\lambda\otimes\chi^\mu$, that is, the coefficient of $\chi^\nu$ in the expansion
$$\chi^\lambda\otimes\chi^\mu=\sum_{\nu\vdash n}g(\lambda,\mu;\nu)\chi^\nu.$$
In above, all partitions corresponding to the set of nonzero Kronecker coefficients is denoted by
$$\Phi(\lambda,\mu)=\{\nu~|~g(\lambda,\mu;\nu)\neq0\}.$$
Kronecker coefficients are only understood in some special cases. It is a difficult open problem to give a combinatorial interpretation of the numbers $g(\lambda,\mu;\nu)$ \cite{Kly04,Stan99}.


Let $\lambda\vdash n$, $\mu\vdash m$, $\nu\vdash n+m$ and $\chi^\lambda\hat{\otimes}\chi^\mu$ be the outer product of $\chi^\lambda$ and $\chi^\mu$. The Littlewood-Richardson coefficient $c_{\lambda,\mu}^\nu$ is the
multiplicity  of $\chi^\nu$ in $\chi^\lambda\hat{\otimes}\chi^\mu$. There is an efficient algorithm for calculation $c_{\lambda,\mu}^\nu$ known as Littlewood-Richardson rule, see \cite{Jam81,Stan99} for details. By the semigroup property of Littlewood-Richardson coefficients (see for example \cite{Chris06}), we have that if $c_{\lambda,\mu}^\nu>0$ then $c_{\ell\lambda,\ell\mu}^{\ell\nu}>0$ for all $\ell>0$.

\subsection{Spectra of quantum states and their orders}
Let $\mathcal{H}$ be a $d$-dimensional complex Hilbert space and denote by $\mathcal{L}(\mathcal{H})$ the space of linear operators mapping $\mathcal{H}$ into itself. A positive semidefinite operator $\rho\in \mathcal{L}(\mathcal{H})$ is called a density operator if $tr(\rho)=1$. Denote the set of density operators in $\mathcal{L}(\mathcal{H})$ by $\mathcal{D}(\mathcal{H})$. Density operators are the mathematical formalism to describe the states of quantum objects. Denote the spectrum of $\rho$ by $Spec~\rho$, it will always be understood as the vector $(r_1, . . . , r_d )$ of eigenvalues of $\rho$ in decreasing order, that is, $r_1\geq . . . \geq r_d$. The rank of a density operator $\rho$ is denoted by $rank~\rho$.

Suppose that $\lambda=(\lambda_1,\lambda_2,...,\lambda_{d_1})$ and $\mu=(\mu_1,\mu_2...,\mu_{d_2})$ are  spectra of two quantum states or partitions of some $n\in \mathbb{N}$.
Recall that $\lambda$ is less than $\mu$ in lexicographic order if, for some index $i$,
$$\lambda_j=\mu_j~\text{for}~j<i~\text{and}~\lambda_i<\mu_i,$$
which is denoted by $\lambda\leq\mu$. On the other hand, $\lambda$ is less than $\mu$ in dominance order (or $\lambda$ is majorized by $\mu$) if
$$\sum_{i=1}^{k}\lambda_i\leq\sum_{i=1}^k\mu_i~\text{for all }~k\geq1,$$
which is denoted by $\lambda\unlhd\mu$. It is not hard to see that if $\lambda\unlhd\mu$ then we have $\lambda\leq\mu$, that is, lexicographic order is a refinement of the dominance order \cite{Sag01}.


The state of a system composed of particles $A$ and $B$ is described by a density operator on a tensor product of two Hilbert spaces, $\rho_{AB}\in \mathcal{D}(\mathcal{H}_A\otimes \mathcal{H}_B)$. The partial trace $\rho_A = tr_B(\rho_{AB})$ $\in \mathcal{D}(\mathcal{H}_A)$ of $\rho_{AB}$ obtained by tracing over $B$ then defines the state of particle $A$. Similarly,  $\rho_B=tr_A(\rho_{AB})$ is obtained by tracing out the subsystem $A$. In this way, $\rho_A$, $\rho_B$ are called marginal states (or margins) of $\rho_{AB}$ \cite{Kly04}. For any two density operators $\rho_A\in\mathcal{D}(\mathcal{H}_A)$, $\rho_B\in\mathcal{D}(\mathcal{H}_B)$, the set of states in $\mathcal{D}(\mathcal{H}_A\otimes \mathcal{H}_B)$ with margins $\rho_A$ and $\rho_B$ is defined as
$$\mathcal{C}(\rho_A, \rho_B):=\{\rho\in \mathcal{D}(\mathcal{H}_A\otimes\mathcal{H}_B)|tr_B(\rho)=\rho_A,
~tr_A(\rho)=\rho_B\}.$$
The set of spectra of states in $\mathcal{C}(\rho_A, \rho_B)$ is defined as
$$\mathcal{S}(\rho_A, \rho_B):=\{Spec~\rho|\rho\in \mathcal{C}(\rho_A, \rho_B)\}.$$
It was shown in \cite{Chris05,Chris07,Kly04} that $\mathcal{S}(\rho_A, \rho_B)$ is a convex polytope. Hence, Klyachko's conjecture states that if the spectrum of a state in $\mathcal{C}(\rho_A, \rho_B)$ has maximal lexicographic order in $\mathcal{S}(\rho_A, \rho_B)$, then it has minimal rank among all other states in $\mathcal{C}(\rho_A, \rho_B)$.
\subsection{The spectra and nonzero Kronecker coefficients}
Given a description of the set of possible triples of spectra $(Spec~\rho_{AB}, Spec~\rho_A, Spec~\rho_B)$ for fixed $d_A =\dim \mathcal{H}_A$ and $d_B=\dim \mathcal{H}_B$ is fundamental in quantum marginal problems.

It turns out that the admissible spectral triples correspond to nonzero Kronecker coefficients. It was shown in \cite{Kly04} (see also \cite{Chris05,Chris07}) that for a density operator $\rho_{AB}$ with the rational spectral triple $(Spec~\rho_A, Spec~\rho_B,$ $Spec~\rho_{AB})$ $=$ $(r_A, r_B, r_{AB})$ there is an integer $m > 0$ such that $g(mr_A,mr_B;mr_{AB})\neq 0$. Conversely, suppose that $\lambda$, $\mu$, $\nu\vdash k$ are partitions with lengths $l(\lambda)\leq m$, $l(\mu)\leq n$, $l(\nu)\leq nm$.  In \cite{Chris07} the authors showed that if $g(\lambda,\mu;\nu)\neq 0$ then there exists a density operator $\rho_{AB}$ on $\mathcal{H}_A \otimes \mathcal{H}_B =\mathbb{C}^m \otimes \mathbb{C}^n$ with spectra
$Spec~\rho_A = \bar{\lambda}$, $Spec~\rho_B = \bar{\mu}$, $Spec~\rho_{AB}=\bar{\nu}$. Hence the length of $\nu$ is the rank of $\rho_{AB}$.





%

\section{The maximal lexicographic spectrum of $\mathcal{C}(\frac{1}{n}I_n,\frac{1}{m}I_m)$ and counterexamples for Klyachko's conjecture}
\label{sec:results}

In this section, through the correspondence between the spectra  and nonzero Kronecker coefficients we will find the maximal lexicographic spectrum for states in $\mathcal{C}(\frac{1}{n}I_n,\frac{1}{m}I_m)$ and give two classes of counterexamples for Klyachko's conjecture. Moreover, we discuss their extremity.
The following proposition is well-known (see e.g. \cite{Iken17}).

\begin{proposition}[Transposition property]\label{prp-trans}
Suppose that $\lambda$, $\mu$, $\nu\vdash n$. Then we have
$g(\lambda,\mu;\nu)=g(\lambda,\mu^t;\nu^t)$.
\end{proposition}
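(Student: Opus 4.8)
The plan is to prove the identity $g(\lambda,\mu;\nu)=g(\lambda,\mu^t;\nu^t)$ by relating the transpose operation on Young diagrams to tensoring with the sign character $\varepsilon=\chi^{(1^n)}$ of $S_n$. The starting point is the classical fact that $\chi^{\mu^t}=\varepsilon\otimes\chi^\mu$ for every partition $\mu\vdash n$, i.e. the transpose of a diagram corresponds at the level of characters to multiplication by the sign representation. I would record this as the one external input needed; everything else is a short computation in the character ring of $S_n$ with the standard inner product $\langle\cdot,\cdot\rangle$ under which the irreducible characters $\{\chi^\nu\}_{\nu\vdash n}$ form an orthonormal basis.

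First I would write the Kronecker coefficient as an inner product: $g(\lambda,\mu;\nu)=\langle \chi^\lambda\otimes\chi^\mu,\chi^\nu\rangle$. Then I would compute
\[
g(\lambda,\mu^t;\nu^t)=\langle \chi^\lambda\otimes\chi^{\mu^t},\chi^{\nu^t}\rangle
=\langle \chi^\lambda\otimes(\varepsilon\otimes\chi^\mu),\varepsilon\otimes\chi^\nu\rangle .
\]
Next I would use that $\varepsilon$ is a one-dimensional character, so $\varepsilon(g)^2=1$ for all $g\in S_n$; hence pointwise $\big(\chi^\lambda\otimes\varepsilon\otimes\chi^\mu\big)(g)\cdot\overline{(\varepsilon\otimes\chi^\nu)(g)} = \big(\chi^\lambda\otimes\chi^\mu\big)(g)\cdot\overline{\chi^\nu(g)}$, because the two factors of $\varepsilon(g)=\overline{\varepsilon(g)}$ (the sign character is real-valued, taking values $\pm1$) cancel. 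Summing over $g$ and dividing by $|S_n|$ gives $\langle \chi^\lambda\otimes\chi^{\mu^t},\chi^{\nu^t}\rangle=\langle \chi^\lambda\otimes\chi^\mu,\chi^\nu\rangle$, which is exactly $g(\lambda,\mu;\nu)=g(\lambda,\mu^t;\nu^t)$.

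There is no real obstacle here; the statement is essentially a formal consequence of $\chi^{\mu^t}=\varepsilon\otimes\chi^\mu$ together with $\varepsilon^2=\mathbf{1}$. The only point that deserves a word of care is the bookkeeping of complex conjugates in the inner product: since $\varepsilon$ takes values in $\{\pm1\}\subset\mathbb{R}$ one has $\varepsilon(g)=\overline{\varepsilon(g)}$ and $\varepsilon(g)^2=1$, so the two sign factors introduced on the two sides of the inner product genuinely cancel rather than conspire to a $-1$. Alternatively, and perhaps more cleanly, one can phrase the whole argument as: the map $\chi\mapsto\varepsilon\otimes\chi$ is an isometric involution of the character ring permuting the $\chi^\nu$ by $\nu\mapsto\nu^t$, and it is a ring homomorphism for the pointwise (Kronecker) product up to the harmless identity $\varepsilon\otimes\varepsilon=\mathbf 1$; applying it to the decomposition $\chi^\lambda\otimes\chi^\mu=\sum_\nu g(\lambda,\mu;\nu)\chi^\nu$ and comparing coefficients yields the claim. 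I would present whichever of these two phrasings is shorter in context, and cite \cite{Iken17} (or a standard reference such as \cite{Sag01}) for $\chi^{\mu^t}=\varepsilon\otimes\chi^\mu$.
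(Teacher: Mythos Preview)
Your argument is correct and is precisely the standard proof: use $\chi^{\mu^t}=\varepsilon\otimes\chi^\mu$ together with $\varepsilon^2=\mathbf{1}$ to cancel the two sign factors in the inner product. Note, however, that the paper does not actually prove this proposition; it is stated as well-known with a reference to \cite{Iken17}, so there is no in-paper proof to compare against. Your write-up would serve perfectly well as a self-contained justification, and your suggested citation to \cite{Sag01} or \cite{Iken17} for the identity $\chi^{\mu^t}=\varepsilon\otimes\chi^\mu$ is appropriate.
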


By the discussion in Section 6.4 of \cite{Kly04} we have the following proposition which is also well-known. It gives a lower bound for ranks of states in $\rho\in\mathcal{C}(\rho_A, \rho_B)$. In many cases the lower bound is best, see Remark \ref{re-attain}.
\begin{proposition}\label{prp-rankab}
Let $\rho\in\mathcal{C}(\rho_A, \rho_B)$ and denote $k_A=rank~\rho_A$, $k_B=rank~\rho_B$. Suppose that $k_A\leq k_B$. Then $\lceil\frac{k_B}
{k_A}\rceil\leq rank~\rho\leq k_A k_B$.
\end{proposition}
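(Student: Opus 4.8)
The plan is to establish the two inequalities $\lceil k_B/k_A\rceil \le \operatorname{rank}\rho$ and $\operatorname{rank}\rho \le k_A k_B$ separately, working in bases adapted to the spectral decompositions of $\rho_A$ and $\rho_B$. Since ranks are invariant under local unitaries $U_A\otimes U_B$, I would first choose orthonormal bases $\{|i\rangle_A\}_{i=1}^{d_A}$ and $\{|j\rangle_B\}_{j=1}^{d_B}$ of $\mathcal{H}_A$ and $\mathcal{H}_B$ diagonalizing $\rho_A$ and $\rho_B$ respectively, so that $\rho_A = \sum_{i=1}^{k_A} p_i |i\rangle\langle i|$ with $p_i>0$ and $\rho_B = \sum_{j=1}^{k_B} q_j|j\rangle\langle j|$ with $q_j>0$. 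The key observation is that the support of $\rho$ must be contained in $(\operatorname{supp}\rho_A)\otimes(\operatorname{supp}\rho_B)$: if $|i\rangle$ lies in the kernel of $\rho_A$, then $\langle i|\rho_A|i\rangle = 0$, and since $\langle i|\rho_A|i\rangle = \sum_j \langle i,j|\rho|i,j\rangle$ is a sum of nonnegative terms (each $\langle i,j|\rho|i,j\rangle \ge 0$ because $\rho\succeq 0$), every such diagonal entry vanishes, which forces the entire row and column of $\rho$ indexed by $(i,j)$ to vanish (again using positivity: a PSD matrix with a zero diagonal entry has the corresponding row and column zero). The same argument applies on the $B$ side. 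Hence $\rho$ effectively lives on $\mathbb{C}^{k_A}\otimes\mathbb{C}^{k_B}$, which immediately gives $\operatorname{rank}\rho \le k_A k_B$.

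For the lower bound, I would use the fact that partial trace cannot decrease rank too drastically. Concretely, write a minimal-length eigendecomposition $\rho = \sum_{\alpha=1}^{r} \lambda_\alpha |\psi_\alpha\rangle\langle\psi_\alpha|$ with $r = \operatorname{rank}\rho$ and $\lambda_\alpha>0$. Tracing out $A$, each $|\psi_\alpha\rangle\langle\psi_\alpha|$ contributes $\operatorname{tr}_A(|\psi_\alpha\rangle\langle\psi_\alpha|)$, a state on $\mathcal{H}_B$ whose rank equals the Schmidt rank of $|\psi_\alpha\rangle$, which is at most $d_A$; but more to the point, in our adapted basis this Schmidt rank is at most $k_A$ since $|\psi_\alpha\rangle$ lies in $\mathbb{C}^{k_A}\otimes\mathbb{C}^{k_B}$. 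Therefore $\rho_B = \sum_\alpha \lambda_\alpha \operatorname{tr}_A(|\psi_\alpha\rangle\langle\psi_\alpha|)$ is a sum of $r$ PSD operators each of rank $\le k_A$, so $k_B = \operatorname{rank}\rho_B \le \sum_\alpha \operatorname{rank}(\operatorname{tr}_A(|\psi_\alpha\rangle\langle\psi_\alpha|)) \le r\,k_A$, i.e.\ $r \ge k_B/k_A$, and since $r$ is an integer, $r\ge\lceil k_B/k_A\rceil$. The hypothesis $k_A\le k_B$ is only needed so that $\lceil k_B/k_A\rceil$ is the nontrivial bound (it equals $1$ otherwise, which is automatic).

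I do not anticipate any serious obstacle here; the proof is essentially bookkeeping with positive semidefiniteness and subadditivity of rank under sums of PSD operators. The one place to be slightly careful is the claim that $\operatorname{rank}(\operatorname{tr}_A|\psi_\alpha\rangle\langle\psi_\alpha|)$ equals the Schmidt rank of $|\psi_\alpha\rangle$ and is bounded by $\min(d_A,d_B)$ — this is the standard Schmidt decomposition, but one should note that restricting the ambient space to $\mathbb{C}^{k_A}\otimes\mathbb{C}^{k_B}$ (justified in the first paragraph) is what yields the sharper bound $k_A$ rather than $d_A$. An alternative, perhaps cleaner route for the lower bound is to invoke directly the inequality $\operatorname{rank}(\operatorname{tr}_A\rho) \le k_A \cdot \operatorname{rank}\rho$, which follows by writing $\operatorname{tr}_A\rho = \sum_{i=1}^{k_A} \langle i|_A\,\rho\,|i\rangle_A$ as a sum of $k_A$ operators on $\mathcal{H}_B$, each of which is a compression of $\rho$ and hence has rank at most $\operatorname{rank}\rho$. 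Either formulation yields the result.
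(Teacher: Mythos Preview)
Your argument is correct and is essentially the standard one. Note, however, that the paper does not actually supply a proof of this proposition: it is stated as well-known, with a pointer to Section~6.4 of \cite{Kly04}, and no proof environment appears. So there is no ``paper's own proof'' to compare against beyond that reference. The reasoning you give---restricting to $(\operatorname{supp}\rho_A)\otimes(\operatorname{supp}\rho_B)$ for the upper bound, and using subadditivity of rank for a sum of PSD operators together with the Schmidt-rank bound (or, equivalently, the compression formula $\operatorname{tr}_A\rho=\sum_{i=1}^{k_A}\langle i|_A\,\rho\,|i\rangle_A$) for the lower bound---is exactly the kind of elementary linear-algebra argument that justifies calling the result well-known, and it matches what one finds in Klyachko's discussion.
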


\begin{proposition}\label{prp-sabphi}
Suppose that $\bar{\lambda}=Spec~\rho_A$, $\bar{\mu}=Spec~\rho_B$ are rational spectra and $n_{AB}$ the minimal positive integer such that $\lambda=n_{AB}\bar{\lambda}$ and $\mu=n_{AB}\bar{\mu}$ are partitions. Then $\mathcal{S}(\rho_A, \rho_B)$ is the closure of $\frac{1}{\ell n_{AB}}\Phi(\ell\lambda,\ell\mu)$ for $\ell\geq1$, that is,
$$\mathcal{S}(\rho_A, \rho_B)=\overline{\bigcup_{\ell=1}^{\infty}\frac{1}{\ell n_{AB}}\Phi(\ell\lambda,\ell\mu)}.$$
\end{proposition}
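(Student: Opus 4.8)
The plan is to establish the claimed equality by proving two inclusions, using the correspondence between admissible spectral triples and nonzero Kronecker coefficients recalled in Subsection~2.3, together with the semigroup-type behaviour of these coefficients under scaling.

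First I would prove that each set $\frac{1}{\ell n_{AB}}\Phi(\ell\lambda,\ell\mu)$ is contained in $\mathcal{S}(\rho_A,\rho_B)$. Fix $\ell\geq 1$ and take $\nu\in\Phi(\ell\lambda,\ell\mu)$, so $g(\ell\lambda,\ell\mu;\nu)\neq 0$ with $\ell\lambda\vdash \ell n_{AB}$ and $\ell\mu\vdash\ell n_{AB}$ (note $|\lambda|=|\mu|=n_{AB}$ since $\bar\lambda,\bar\mu$ are probability distributions). By the converse direction of the correspondence from~\cite{Chris07}, nonvanishing of $g(\ell\lambda,\ell\mu;\nu)$ produces a density operator $\rho_{AB}$ on $\mathcal{H}_A\otimes\mathcal{H}_B$ with $Spec~\rho_A=\overline{\ell\lambda}=\bar\lambda$, $Spec~\rho_B=\overline{\ell\mu}=\bar\mu$, and $Spec~\rho_{AB}=\bar\nu=\frac{1}{\ell n_{AB}}\nu$. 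Hence $\frac{1}{\ell n_{AB}}\nu\in\mathcal{S}(\rho_A,\rho_B)$. Since $\mathcal{S}(\rho_A,\rho_B)$ is closed (being a convex polytope by~\cite{Chris05,Chris07,Kly04}), the closure of the union over all $\ell$ is also contained in it, giving the inclusion $\supseteq$.

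For the reverse inclusion, take any $r_{AB}\in\mathcal{S}(\rho_A,\rho_B)$, realized by some $\rho_{AB}\in\mathcal{C}(\rho_A,\rho_B)$. The rational points are dense in the polytope $\mathcal{S}(\rho_A,\rho_B)$ — indeed they are exactly the rational points of a rational polytope — so it suffices to treat rational $r_{AB}$ and then take closure. For rational $r_{AB}$, the direct direction of the correspondence from~\cite{Kly04} (see also \cite{Chris05,Chris07}) gives an integer $m>0$ with $g(m\bar\lambda,m\bar\mu;mr_{AB})\neq 0$; writing $m=\ell n_{AB}$ after enlarging $m$ to a multiple of $n_{AB}$ (legitimate by the semigroup/scaling property of nonzero Kronecker coefficients, which preserves nonvanishing under multiplying all three partitions by a common factor), we get $g(\ell\lambda,\ell\mu;\ell n_{AB}r_{AB})\neq 0$, so $\ell n_{AB}r_{AB}\in\Phi(\ell\lambda,\ell\mu)$ and $r_{AB}\in\frac{1}{\ell n_{AB}}\Phi(\ell\lambda,\ell\mu)$. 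Taking closures yields $\subseteq$.

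The main obstacle is the bookkeeping in the denominators: one must be careful that $m$ may be taken to be a multiple of $n_{AB}$ (so that the scaled margins $m\bar\lambda=\ell\lambda$ and $m\bar\mu=\ell\mu$ are genuine partitions, which is why $n_{AB}$ was defined as the minimal such integer), and that scaling all three partitions $\lambda$, $\mu$, $\nu$ by a common positive integer keeps the Kronecker coefficient nonzero — this is the analogue for Kronecker coefficients of the semigroup property stated for Littlewood–Richardson coefficients in Subsection~2.1, and it is what lets us pass freely between different values of $\ell$. Everything else is a direct translation through the spectra–Kronecker dictionary plus the observation that a rational polytope is the closure of its rational points.
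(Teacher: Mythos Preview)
Your proposal is correct and follows essentially the same route as the paper: reduce to rational spectra and apply the two directions of the spectra--Kronecker correspondence from \cite{Chris07,Kly04}, using closedness of the polytope $\mathcal{S}(\rho_A,\rho_B)$ to pass to the closure. One simplification worth noting: you do not need the semigroup property of Kronecker coefficients to enlarge $m$ to a multiple of $n_{AB}$, because any $m$ for which $m\bar\lambda$ and $m\bar\mu$ are integer vectors is \emph{automatically} a multiple of $n_{AB}$ (by minimality, $n_{AB}$ is the least common multiple of the denominators of the entries of $\bar\lambda$ and $\bar\mu$); this is precisely how the paper disposes of this bookkeeping step.
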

\begin{proof}
Let $\mathcal{QS}(\rho_A, \rho_B)$ be the set of rational spectra in $\mathcal{S}(\rho_A, \rho_B)$. It suffices to show that
\begin{align}\label{eq-qsab}
\mathcal{QS}(\rho_A, \rho_B)=\bigcup_{\ell=1}^{\infty}\frac{1}{\ell n_{AB}}\Phi(\ell\lambda,\ell\mu).
\end{align}

By Theorem 2.3 of \cite{Chris07}, for any $\bar{\nu}\in \mathcal{QS}(\rho_A, \rho_B)$ there exists an integer $m$ such that $m\bar{\nu}$, $m\bar{\lambda}$ and $m\bar{\mu}$ are partitions and
\begin{align}\label{eq-mlam}
g(m\bar{\lambda},m\bar{\mu};m\bar{\nu})\neq 0.
\end{align}
Since $\bar{\lambda}$, $\bar{\mu}$ consist of rational numbers, let
$$\bar{\lambda}=(a_1/b_1, a_2/b_2,\ldots,a_s/b_s)~\text{and}~\bar{\mu}=(c_1/d_1, c_2/d_2,\ldots,c_t/d_t)$$ where $a_i$ and $b_i$ ($i=1,,2\ldots,s$), $c_j$ and $d_j$ ($j=1,,2\ldots,t$) are integers and relatively prime. Then we have that $n_{AB}$ (resp. $m$) is the least common multiple (resp. the common multiple) of $\{b_1,b_2,\ldots,b_s,d_1,d_2,\ldots,d_t\}$. Hence we have $n_{AB}\mid m$. Let $k=n_{AB}\mid m$, then (\ref{eq-mlam}) is equivalent to
$$\bar{\nu}\in \frac{1}{k n_{AB}}\Phi(k\lambda,k\mu).$$
Thus we have that
$\mathcal{QS}(\rho_A, \rho_B)\subseteq\bigcup_{\ell=1}^{\infty}\frac{1}{\ell n_{AB}}\Phi(\ell\lambda,\ell\mu),$ and therefore (\ref{eq-qsab}) holds by Theorem 3.2 of \cite{Chris07}.
\end{proof}

Given $\lambda$ and $\mu$ partitions such that  $\mu_i \leq\lambda_i$ for all $i\geq1$, we write $\mu\subseteq\lambda$ (or $\mu\subset\lambda$ if $\mu_i < \lambda_i$ for some $i$). In \cite{Val03} (see also \cite{Cla93}), the author introduced a construction, which can be used to obtain the maximal component, in the lexicographic order in $\chi^\lambda\otimes\chi^\mu$. The construction is as follows.


Let $\lambda, \mu$ be partitions of $n$, together with two strictly decreasing sequences of partitions
\begin{equation}\label{eq-seqlm}
\begin{split}
\lambda&=\lambda(1)\supset\cdots\lambda(r)\supset\lambda(r+1)=\emptyset,\\
\mu&=\mu(1)\supset\cdots\mu(r)\supset\mu(r+1)=\emptyset,
\end{split}
\end{equation}
such that
$$c_{\lambda(i)\cap\mu(i),\lambda(i+1)}^{\lambda(i)}\neq 0~~\text{and}~~c_{\lambda(i)\cap\mu(i),\mu(i+1)}^{\mu(i)}\neq 0,$$
for all $1\leq i \leq r$. Set
$$\nu_i =|\lambda(i) \cap \mu(i)|$$
for all $1 \leq i \leq r$. Then $\nu= (\nu_1, . . . ,\nu_r )$ is a partition of $n$.
Any $\nu$ obtained in this
way is called a {\it partition of strip type derived from} $ (\lambda, \mu)$ \cite{Val03}. For example, if we let $\lambda=(2^5)$, $\mu=(5^2)$ and $\nu=(4,4,1,1)$, then $\nu$ is a partition of strip type derived from $ (\lambda, \mu)$. The corresponding sequences of partitions are
\begin{equation*}
\begin{split}
&(2^5)\supset (2^3)\supset (2)
\supset (1)\supset \emptyset,\\
 &(5^2)\supset (3^2)\supset (1^2)
\supset (1)\supset \emptyset.\\
\end{split}
\end{equation*}
 Clausen and Meier showed that the maximal component $\chi^\nu$ of $\chi^\lambda\otimes\chi^\mu$ in the lexicographic order corresponds to a derived partition of strip type \cite{Cla93}.


%

Observe that $\lambda\cap\mu\subseteq\lambda$. In the Young diagram of $\lambda$ we let $\lambda\backslash \lambda\cap\mu$ denote boxes which belong to  $\lambda$ but not $\lambda\cap\mu$ (similarly for $\mu\backslash \lambda\cap\mu$). It is called \emph{skew diagram} in \cite{Val03} (see also \cite{Bess14}).  $\lambda\backslash \lambda\cap\mu$ may correspond to a partition. For example, if we let $\lambda=(2^5)$ and $\mu=(5^2)$, then $\lambda\cap\mu=(2,2)$ and $\lambda\backslash \lambda\cap\mu=(2^3)$ which is also a partition.

%

%
%

\begin{proposition}\label{prp-only1}
Suppose that $\lambda$, $\mu\vdash n$ are two rectangular partitions. Then there is exactly one partition of strip type derived from $ (\lambda, \mu)$ which has the maximal lexicographic order in $\Phi(\lambda,\mu)$.
\end{proposition}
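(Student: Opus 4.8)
The plan is to establish the stronger fact that when $\lambda$ and $\mu$ are rectangular there is exactly \emph{one} partition of strip type derived from $(\lambda,\mu)$. Granting this, the proposition is immediate: $\Phi(\lambda,\mu)$ is a nonempty finite set (as $\chi^\lambda\otimes\chi^\mu\neq 0$), so it has a lexicographically maximal element $\tau$; by the result of Clausen and Meier recalled above, $\tau$ is a partition of strip type derived from $(\lambda,\mu)$; hence $\tau$ is the unique such partition, and in particular the unique one that is maximal in $\Phi(\lambda,\mu)$.

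To prove uniqueness I would use the description of Littlewood--Richardson coefficients with a rectangular target. If $\alpha\subseteq(a^p)$, then $s_{(a^p)/\alpha}=s_{\widehat\alpha}$, where $\widehat\alpha$ is the partition obtained by rotating the skew shape $(a^p)/\alpha$ by $180^{\circ}$, i.e. $\widehat\alpha_i=a-\alpha_{p+1-i}$ for $1\leq i\leq p$; this is just the rotational invariance of skew Schur functions. Since $c_{\alpha,\beta}^{(a^p)}=\langle s_\beta,\,s_{(a^p)/\alpha}\rangle$, it follows that $c_{\alpha,\beta}^{(a^p)}\neq 0$ precisely when $\alpha\subseteq(a^p)$ and $\beta=\widehat\alpha$, in which case $c_{\alpha,\beta}^{(a^p)}=1$; in particular $\beta$ is determined by $\alpha$ and the rectangle. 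Now take any strip-type datum (\ref{eq-seqlm}) for $(\lambda,\mu)$. The key claim is that every $\lambda(i)$ and every $\mu(i)$ is rectangular; granting it, $\lambda(i)\cap\mu(i)$ is completely determined by $\lambda(i),\mu(i)$, and then the condition $c_{\lambda(i)\cap\mu(i),\lambda(i+1)}^{\lambda(i)}\neq 0$ forces $\lambda(i+1)$ to be the rotated complement of $\lambda(i)\cap\mu(i)$ in $\lambda(i)$, and likewise $\mu(i+1)$ in $\mu(i)$. Hence the sequences (\ref{eq-seqlm}), and therefore $\nu$, are uniquely determined.

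It remains to prove the claim, which is the only step requiring genuine work; I would argue by induction on $i$, writing $\lambda(i)=(a_i^{p_i})$, $\mu(i)=(b_i^{q_i})$ with $a_ip_i=b_iq_i$ (the case $i=1$ being the hypothesis on $\lambda,\mu$). Then $\lambda(i)\cap\mu(i)=\bigl(\min(a_i,b_i)^{\min(p_i,q_i)}\bigr)$, and a short case analysis according as $p_i<q_i$, $p_i>q_i$, or $p_i=q_i$ shows that its rotated complement inside $\lambda(i)$ equals, respectively, $\bigl((a_i-b_i)^{p_i}\bigr)$, $\bigl(a_i^{\,p_i-q_i}\bigr)$, or $\emptyset$ — in each case rectangular — and symmetrically for $\mu(i+1)$ inside $\mu(i)$; moreover both chains reach $\emptyset$ at the same step, namely the first index with $p_i=q_i$ (equivalently $a_i=b_i$), and such an index exists because $|\lambda(i)|$ strictly decreases as long as $\lambda(i),\mu(i)\neq\emptyset$ (then $\lambda(i)\cap\mu(i)\neq\emptyset$). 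The main obstacle is precisely this verification — that the complement forced by the rectangular Littlewood--Richardson rule is again rectangular and that the two chains terminate together; once that is in place the proposition follows from the results quoted in the excerpt.
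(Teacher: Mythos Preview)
Your proposal is correct and follows essentially the same route as the paper: both argue inductively that the rectangularity of $\lambda(i),\mu(i)$ forces a unique choice of $\lambda(i+1),\mu(i+1)$ (again rectangular), whence there is a unique strip-type partition, which by Clausen--Meier must be the lexicographic maximum of $\Phi(\lambda,\mu)$. The only difference is that you prove the rectangular Littlewood--Richardson fact $c_{\alpha,\beta}^{(a^p)}=\delta_{\beta,\widehat\alpha}$ directly via rotational invariance of skew Schur functions, whereas the paper simply cites this as Lemma~3.3 of \cite{Bess14}.
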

\begin{proof}
Since  $\lambda$ and $\mu$ are two rectangular partitions, by Lemma 3.3 of~\cite{Bess14} there exists only one pair of partitions $\lambda(2)$, $\mu(2)$ such that $c^{\lambda}_{\lambda\cap\mu,\lambda(2)}\neq0$ and $c^{\mu}_{\lambda\cap\mu,\mu(2)}\neq0$ where  $\lambda(2)=\lambda\backslash \lambda\cap\mu$ and $\mu(2)=\mu\backslash \lambda\cap\mu$ are also rectangular partitions.
Similarly, if we continue the construction described in (\ref{eq-seqlm}), for each $1\leq i\leq r$ there exists only one pair of rectangular partitions $\lambda(i+1)$, $\mu(i+1)$ such that $c^{\lambda(i)}_{\lambda(i)\cap\mu(i),\lambda(i+1)}\neq0$ and $c^{\mu(i)}_{\lambda(i)\cap\mu(i),\mu(i+1)}\neq0$.
Hence, there is exactly one partition of strip type derived from $ (\lambda, \mu)$ denoted by $\nu$. By Theorem 3.5 of \cite{Val03} we have that $\nu\in\Phi(\lambda,\mu)$.
Since the maximal component of $\chi^\lambda\otimes\chi^\mu$ in the lexicographic order corresponds to a derived partition of strip type, by uniqueness we have that $\nu$ has the maximal lexicographic order in $\Phi(\lambda,\mu)$ \cite{Cla93,Val03}.
\end{proof}
%

The following proposition can be  obtained from (6.10) of \cite{Kly04} which gives us the value of Kronecker coefficient in Proposition \ref{prp-only1}.
\begin{proposition}\label{prp-multiv}
Suppose that $\lambda$, $\mu\vdash n$ are two rectangular partitions. Let $\nu$ be the  partition of strip type derived from $ (\lambda, \mu)$. Then $g(\lambda,\mu;\nu)=1$.
\end{proposition}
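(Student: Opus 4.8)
The plan is to run an induction on $n=|\lambda|$ driven by the recursive multiplicity formula for the lexicographically largest constituent of $\chi^{\lambda}\otimes\chi^{\mu}$, which is (6.10) of \cite{Kly04} (going back to Dvir). Recall that for arbitrary $\lambda,\mu\vdash n$ the largest possible first part of a $\nu$ with $g(\lambda,\mu;\nu)\neq0$ is $|\lambda\cap\mu|$; the partition $\nu$ of strip type produced by (\ref{eq-seqlm}) has $\nu_1=|\lambda(1)\cap\mu(1)|=|\lambda\cap\mu|$ and lies in $\Phi(\lambda,\mu)$ by Proposition \ref{prp-only1}, so the formula applies to it and reads
\[
g(\lambda,\mu;\nu)=\sum_{\alpha,\beta\,\vdash\,n-\nu_1} c^{\lambda}_{\lambda\cap\mu,\,\alpha}\; c^{\mu}_{\lambda\cap\mu,\,\beta}\; g(\alpha,\beta;\hat\nu),
\]
where $\hat\nu=(\nu_2,\nu_3,\dots)$, all of $\alpha,\beta,\hat\nu$ being partitions of $n-\nu_1$.

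I would first record the shape bookkeeping forced by the hypothesis that $\lambda$ and $\mu$ are rectangles of the \emph{same} size $n$: writing $\lambda=(b^{a})$ and $\mu=(d^{c})$ with, say, $b\le d$ (hence $a\ge c$, since $ba=dc$), one has $\lambda\cap\mu=(b^{c})$, and the only pieces that can occur are $\lambda(2)=\lambda\setminus(\lambda\cap\mu)=(b^{\,a-c})$ and $\mu(2)=\mu\setminus(\lambda\cap\mu)=((d-b)^{c})$, both rectangles, both of size $n-|\lambda\cap\mu|$. Crucially, the skew diagrams $\lambda/(\lambda\cap\mu)$ and $\mu/(\lambda\cap\mu)$ are \emph{translates of straight rectangles}, so $c^{\lambda}_{\lambda\cap\mu,\,\alpha}$ vanishes unless $\alpha=\lambda(2)$ and then equals $1$ (the superstandard filling), and likewise $c^{\mu}_{\lambda\cap\mu,\,\beta}=\delta_{\beta,\mu(2)}$. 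Hence the double sum above collapses to the single term
\[
g(\lambda,\mu;\nu)=c^{\lambda}_{\lambda\cap\mu,\,\lambda(2)}\;c^{\mu}_{\lambda\cap\mu,\,\mu(2)}\;g(\lambda(2),\mu(2);\hat\nu)=g(\lambda(2),\mu(2);\hat\nu).
\]

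Next I would close the induction. By uniqueness of the strip-type partition in the rectangular case (Proposition \ref{prp-only1}), $\hat\nu$ is exactly the partition of strip type derived from the rectangular pair $(\lambda(2),\mu(2))$, whose common size $n-|\lambda\cap\mu|=n-bc$ is strictly smaller than $n$ (since $bc\ge1$ whenever $\lambda,\mu\neq\emptyset$). The base case $\lambda=\mu=\emptyset$, where $g(\emptyset,\emptyset;\emptyset)=1$, is trivial, so by the induction hypothesis $g(\lambda(2),\mu(2);\hat\nu)=1$ and therefore $g(\lambda,\mu;\nu)=1$. (Equivalently, iterating the collapse along the whole chain (\ref{eq-seqlm}) writes $g(\lambda,\mu;\nu)=\prod_{i=1}^{r} c^{\lambda(i)}_{\lambda(i)\cap\mu(i),\,\lambda(i+1)}\,c^{\mu(i)}_{\lambda(i)\cap\mu(i),\,\mu(i+1)}$, a product of Littlewood--Richardson coefficients each equal to $1$.)

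The main obstacle is not in the combinatorics here --- the collapse of the sum is immediate once one observes that every skew shape occurring is a translate of a straight rectangle --- but in invoking (6.10) of \cite{Kly04} with precisely the right normalization: one must check that the identity there is stated for (and only for) constituents whose first part attains the maximum $|\lambda\cap\mu|$, and that its remaining terms are genuine Kronecker coefficients $g(\alpha,\beta;\hat\nu)$ with $|\alpha|=|\beta|=|\hat\nu|=n-|\lambda\cap\mu|$ rather than Littlewood--Richardson coefficients (a degree count confirms this). Matching those conventions, and confirming that the unique strip-type $\nu$ is indeed the lexicographically maximal one (so that (6.10) is applicable to it), are the only points requiring care; everything else is the routine induction above.
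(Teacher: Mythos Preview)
Your proof is correct and follows essentially the same route as the paper: the paper simply states that the proposition ``can be obtained from (6.10) of \cite{Kly04}'' without further details, and your argument is precisely the inductive unpacking of that identity --- collapsing Dvir's recursion to a single term because the skew shapes $\lambda/(\lambda\cap\mu)$ and $\mu/(\lambda\cap\mu)$ are translates of rectangles, and then iterating along the strip-type chain. There is nothing to add; your write-up is a fleshed-out version of what the paper only cites.
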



In the following, we let $lcm(n,m)$ denote the least common multiple of $n$ and $m$.
\begin{theorem}\label{thm-maxlex}
For $n\leq m$, let $\lambda=(a^n)$, $\mu=(b^m)\vdash k$ be two rectangular partitions, where $k=lcm(n,m)$, $a=n\mid k$ and $b=m\mid k$. Suppose that $\nu$ is the  partition of strip type derived from $ (\lambda, \mu)$.  Then  the maximal lexicographic spectrum for states in $\mathcal{C}(\frac{1}{n}I_n,\frac{1}{m}I_m)$ is $\nu/k$.
\end{theorem}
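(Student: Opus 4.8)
The plan is to reduce the assertion to a statement about Kronecker coefficients by means of Proposition~\ref{prp-sabphi}, and then to pick out the lexicographically largest such coefficient using the uniqueness of the derived partition of strip type for rectangular pairs (Proposition~\ref{prp-only1}) together with the compatibility of that construction with dilation. First I would specialize Proposition~\ref{prp-sabphi} to $\rho_A=\tfrac1nI_n$, $\rho_B=\tfrac1mI_m$: here $\bar\lambda=Spec~\rho_A=(1/n,\dots,1/n)$ and $\bar\mu=Spec~\rho_B=(1/m,\dots,1/m)$, so the least positive integer making $n_{AB}\bar\lambda$ and $n_{AB}\bar\mu$ partitions is $n_{AB}=lcm(n,m)=k$, and $k\bar\lambda=(a^n)=\lambda$, $k\bar\mu=(b^m)=\mu$. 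Proposition~\ref{prp-sabphi} then gives
\begin{equation*}
\mathcal{S}(\tfrac1nI_n,\tfrac1mI_m)=\overline{\;\bigcup_{\ell\ge 1}\tfrac{1}{\ell k}\,\Phi(\ell\lambda,\ell\mu)\;}.
\end{equation*}
For every $\ell\ge1$ the partitions $\ell\lambda=((\ell a)^n)$ and $\ell\mu=((\ell b)^m)$ are again rectangular partitions of $\ell k$, so Proposition~\ref{prp-only1} provides a unique partition of strip type $\nu^{(\ell)}$ derived from $(\ell\lambda,\ell\mu)$, and $\nu^{(\ell)}$ is the lexicographically largest element of $\Phi(\ell\lambda,\ell\mu)$.

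The key step is to show $\nu^{(\ell)}=\ell\nu$ for all $\ell$. Let $\lambda=\lambda(1)\supset\cdots\supset\lambda(r+1)=\emptyset$ and $\mu=\mu(1)\supset\cdots\supset\mu(r+1)=\emptyset$ be the sequences of rectangular partitions defining $\nu$ as in (\ref{eq-seqlm}), which are unique by the argument in the proof of Proposition~\ref{prp-only1}. Since $\ell P\cap\ell Q=\ell(P\cap Q)$ for any partitions $P,Q$, and since by the semigroup property of Littlewood--Richardson coefficients recalled in Section~\ref{se:preli} the conditions $c^{\lambda(i)}_{\lambda(i)\cap\mu(i),\lambda(i+1)}\ne0$ and $c^{\mu(i)}_{\lambda(i)\cap\mu(i),\mu(i+1)}\ne0$ imply $c^{\ell\lambda(i)}_{\ell(\lambda(i)\cap\mu(i)),\ell\lambda(i+1)}\ne0$ and $c^{\ell\mu(i)}_{\ell(\lambda(i)\cap\mu(i)),\ell\mu(i+1)}\ne0$, the dilated sequences $\ell\lambda(1)\supset\cdots\supset\ell\lambda(r+1)=\emptyset$ and $\ell\mu(1)\supset\cdots\supset\ell\mu(r+1)=\emptyset$ (still strictly decreasing) satisfy all the conditions in (\ref{eq-seqlm}) for the pair $(\ell\lambda,\ell\mu)$. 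By the uniqueness in Proposition~\ref{prp-only1} these are exactly the sequences defining $\nu^{(\ell)}$, whence $\nu^{(\ell)}_i=|\ell\lambda(i)\cap\ell\mu(i)|=\ell\,|\lambda(i)\cap\mu(i)|=\ell\nu_i$. Consequently, for every $\ell$ each element of $\tfrac{1}{\ell k}\Phi(\ell\lambda,\ell\mu)$ is lexicographically not larger than $\tfrac{1}{\ell k}\nu^{(\ell)}=\nu/k$, while $\nu/k=\tfrac1k\nu\in\tfrac1k\Phi(\lambda,\mu)$ because $\nu\in\Phi(\lambda,\mu)$ by Proposition~\ref{prp-only1}, so $\nu/k$ itself lies in $\mathcal{S}(\tfrac1nI_n,\tfrac1mI_m)$.

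It remains to pass from the dense set $\bigcup_{\ell\ge1}\tfrac{1}{\ell k}\Phi(\ell\lambda,\ell\mu)$ of rational spectra to the whole spectral set. Here I would use that $\mathcal{S}(\rho_A,\rho_B)$ is a rational convex polytope \cite{Kly04,Chris05,Chris07}: its lexicographic maximum is obtained by maximizing the coordinates one after another, each step intersecting the polytope with a supporting hyperplane and hence passing to a face, so it is attained at a vertex, which is rational. That vertex therefore lies in $\bigcup_{\ell\ge1}\tfrac{1}{\ell k}\Phi(\ell\lambda,\ell\mu)$ by Proposition~\ref{prp-sabphi}, hence is lexicographically not larger than $\nu/k$; combined with $\nu/k\in\mathcal{S}(\tfrac1nI_n,\tfrac1mI_m)$ this shows the maximal lexicographic spectrum for states in $\mathcal{C}(\tfrac1nI_n,\tfrac1mI_m)$ equals $\nu/k$.

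I expect the main obstacle to be the identity $\nu^{(\ell)}=\ell\nu$: one must make sure that the defining sequence of the dilated rectangular pair is forced to be the dilated sequence, which uses both that all the intermediate partitions stay rectangular (Proposition~\ref{prp-only1}, i.e.\ Lemma~3.3 of \cite{Bess14}) and the semigroup property. Once this is settled, the reduction to Kronecker coefficients and the final passage to the closure are routine.
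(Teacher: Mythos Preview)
Your proposal is correct and follows essentially the same route as the paper: specialize Proposition~\ref{prp-sabphi}, show $\nu^{(\ell)}=\ell\nu$ via the semigroup property of Littlewood--Richardson coefficients together with the uniqueness in Proposition~\ref{prp-only1}, and then pass to the closure. The only minor difference is in the last step, where the paper simply invokes density of rational spectra while you argue more carefully via the rationality of the polytope's vertices; your version is slightly more rigorous there, since lexicographic maxima are not in general preserved under limits.
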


\begin{proof}
It is equivalent to show that $\nu/k$ is maximal in the lexicographic order in $\mathcal{S}(\frac{1}{n}I_n,\frac{1}{m}I_m)$.
For $k=lcm(n,m)$, it is the minimal integer such that both $a=k/n$ and $b=k/m$ are integers. For $\lambda=(a^n)$, $\mu=(b^m)\vdash k$,
by Proposition \ref{prp-sabphi} we have that
$$\mathcal{S}(\frac{1}{n}I_n,\frac{1}{m}I_m)
=\overline{\bigcup_{\ell=1}^{\infty}\frac{1}{\ell k}\Phi(\ell\lambda,\ell\mu)}.$$

Let $\nu^{(1)}=\nu$ and $\nu^{(\ell)}$ be the  partition of strip type derived from $ (\ell\lambda, \ell\mu)$ for all $\ell\geq1$. By (\ref{eq-seqlm}),   let $\nu^{(1)}=(\nu^{(1)}_1,\nu^{(1)}_2,\ldots,\nu^{(1)}_r)$ be derived from  the following two strictly decreasing sequences of partitions
\begin{equation}\label{eq-seqlm1}
\begin{split}
\lambda&=\lambda(1)\supset\cdots\lambda(r)\supset\lambda(r+1)=\emptyset,\\
\mu&=\mu(1)\supset\cdots\mu(r)\supset\mu(r+1)=\emptyset,
\end{split}
\end{equation}
such that
\begin{equation}\label{eq-clm1}
c_{\lambda(i)\cap\mu(i),\lambda(i+1)}^{\lambda(i)}\neq 0~~\text{and}~~c_{\lambda(i)\cap\mu(i),\mu(i+1)}^{\mu(i)}\neq 0,
\end{equation}
and
\begin{equation*}\label{eq-nu1}
\nu^{(1)}_i=|\lambda(i)\cap\mu(i)|
\end{equation*}
for  $1\leq i \leq r$. Then by (\ref{eq-seqlm1}), (\ref{eq-clm1}) and the semigroup property of Littlewood-Richardson coefficients, for all $\ell\geq1$ we have
 \begin{equation}\label{eq-seqlml}
\begin{split}
\ell\lambda&=\ell\lambda(1)\supset\cdots\ell\lambda(r)\supset\ell\lambda(r+1)=\emptyset,\\
\ell\mu&=\ell\mu(1)\supset\cdots\ell\mu(r)\supset\ell\mu(r+1)=\emptyset,
\end{split}
\end{equation}
and
\begin{equation}\label{eq-clml}
c_{\ell\lambda(i)\cap\ell\mu(i),\ell\lambda(i+1)}^{\ell\lambda(i)}\neq 0~~\text{and}~~c_{\ell\lambda(i)\cap\ell\mu(i),\ell\mu(i+1)}^{\ell\mu(i)}\neq 0.
\end{equation}
By (\ref{eq-seqlml}) and (\ref{eq-clml}) we have that $\nu^{(\ell)}$ ($\ell\geq1$) are derived from ($\ell\lambda, \ell\mu$) where $\ell\lambda$, $\ell\mu$ are still rectangular partitions. Moreover, we have
\begin{equation*}
\nu^{(\ell)}=(\nu^{(\ell)}_1,\nu^{(\ell)}_2,\ldots,\nu^{(\ell)}_r),
~\text{where}~\nu^{(\ell)}_i=|\ell\lambda(i)\cap\ell\mu(i)|=\ell\nu^{(1)}_i
\end{equation*}
for $i=1,2,\ldots,r$. That is,
\begin{equation}\label{eq-nul1}
\nu^{(\ell)}=\ell\nu^{(1)}.
\end{equation}

By Proposition \ref{prp-only1}, we have that  $\nu^{(\ell)}$ has maximal lexicographic order in $\Phi(\ell\lambda,\ell\mu)$  for each $\ell\geq1$. Thus  $\nu^{(\ell)}/\ell k$ has
maximal lexicographic order in $\frac{1}{\ell k}\Phi(\ell\lambda,\ell\mu)$ for each $\ell\geq1$. By  (\ref{eq-nul1}) we have that all their normalizations $\nu^{(\ell)}/\ell k$ are equal to $\nu^{(1)}/ k$. Hence  $\nu^{(1)}/ k=\nu/ k$ has the maximal lexicographic order in $\bigcup_{\ell=1}^{\infty}\frac{1}{\ell k}\Phi(\ell\lambda,\ell\mu)$.
Thus, by the density of rational spectra, we have that  $\nu/ k$ has the maximal lexicographic order in $\mathcal{S}(\frac{1}{n}I_n,\frac{1}{m}I_m)$.
\end{proof}


\subsection{Two classes of counterexamples for  Klyachko's conjecture}
In the following two examples, we will show that there exist states $\rho\in \mathcal{C}(\frac{1}{n}I_n,\frac{1}{m}I_m)$ which have the maximal lexicographic spectrum, but they do not have the minimal rank.

\begin{example}\label{exm1-2m}
Let $m \geq 3$ be odd and write it as $2k+1$ for an integer $k$.
Suppose that $\lambda=(m,m)$ and $\mu=(2^m)$. Then it is not hard to see that
 the  partition of strip type derived from $ (\lambda, \mu)$ is
\begin{equation}\label{eq-v2m}
\nu=(4^k,1,1).
\end{equation}
Since $lcm(2,m)=2m$, by Theorem \ref{thm-maxlex} we have
$\nu/2m$ has the maximal lexicographic order in $\mathcal{S}(\frac{1}{2}I_2,\frac{1}{m}I_m)$. Thus, states with maximal lexicographic spectrum in $\mathcal{C}(\frac{1}{2}I_2,\frac{1}{m}I_m)$ have rank $k+2$.

On the other hand, let $\gamma=(4^{k-1},3,3)$. By Theorem 1.6 of \cite{Tew15} and Proposition \ref{prp-trans}, we have that $g(\lambda,\mu;\gamma)=g(\lambda,\mu^t;\gamma^t)=1$. Hence, $\gamma=(4^{k-1},3,3)\in \Phi(\lambda,\mu)$. Hence, there exist states in $\mathcal{C}(\frac{1}{2}I_2,\frac{1}{m}I_m)$ with rank $k+1$. Moreover, by Proposition \ref{prp-rankab} we have that  $k+1$ is the minimal rank for $\mathcal{C}(\frac{1}{2}I_2,\frac{1}{m}I_m)$.
\end{example}

In example above, we see that the rank of states with maximal lexicographic spectrum is close to the minimal rank. However, in the following example we will find that their differences can be large.
\begin{example}\label{exp-2}
Let $\lambda=((n+1)^n)$, $\mu=(n^{n+1})$ and $\nu$ be the partition of strip type derived from $(\lambda, \mu)$. Then we have that $\nu=(\nu_1,\nu_2,\ldots,\nu_{n+1})$ where $\nu_1=n^2$, $\nu_2=\cdots=\nu_{n+1}=1$.
Then by Theorem \ref{thm-maxlex}  the maximal lexicographic spectrum of states in $\mathcal{C}(\frac{1}{n}I_n,\frac{1}{n+1}I_{n+1})$ is
$$\frac{\nu}{n(n+1)}.$$
Hence the rank of those states are $n+1$.

On the other hand, by Proposition 6.9 of \cite{Iken17}, we have that $g(\lambda,\mu;\gamma)=1$, where $$\gamma=(\frac{n(n+1)}{2},\frac{n(n+1)}{2})$$
is a two row partition. Hence, there exist  states with rank 2 in $\mathcal{C}(\frac{1}{n}I_n,\frac{1}{n+1}I_{n+1})$. By Proposition \ref{prp-rankab}, we have that the minimal rank of states in $\mathcal{C}(\frac{1}{n}I_n,\frac{1}{n+1}I_{n+1})$ is 2.
\end{example}

\subsection{On the extremity of states with  maximal lexicographic spectrum}
Let $H$ be a Hermitian matrix. Denote the diagonal entries of $H$ by $D(H)$ which are arranged decreasingly. The well-known Schur Theorem states that $D(H)\trianglelefteq Spec~H$ \cite{Horn,Mars11}. When $D(H)=Spec~H$, by Corollary 4.3.34 and Theorem 4.3.45 in \cite{Horn} we have the following proposition.

\begin{proposition}\label{prp-dhshd}
Let $H$ be a Hermitian matrix.  Then $D(H)=Spec~H$ if and only if $H$ is diagonal.
\end{proposition}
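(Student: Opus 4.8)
The plan is to prove the two implications separately; the forward direction is immediate and the converse rests on a single trace identity. If $H$ is diagonal, then the standard basis vectors are eigenvectors, so the diagonal entries of $H$ are exactly its eigenvalues, and after sorting both in decreasing order we get $D(H)=Spec~H$. Nothing more is needed in this direction.

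For the converse, suppose $D(H)=Spec~H$. Let $d$ be the size of $H$, write $\lambda_1\geq\cdots\geq\lambda_d$ for its eigenvalues (all real, since $H$ is Hermitian) and $h_{11},\ldots,h_{dd}$ for its diagonal entries (also real). First I would observe that equality of the two decreasingly ordered vectors forces $\{\lambda_i\}$ and $\{h_{ii}\}$ to coincide as multisets, and in particular $\sum_i\lambda_i^2=\sum_i h_{ii}^2$. Next I would compute $tr(H^2)$ in two ways: since $H$ is unitarily similar to the diagonal matrix with entries $\lambda_1,\ldots,\lambda_d$, one has $tr(H^2)=\sum_i\lambda_i^2$; on the other hand, expanding the matrix product and using $h_{ji}=\overline{h_{ij}}$ gives $tr(H^2)=\sum_{i,j}h_{ij}h_{ji}=\sum_{i,j}|h_{ij}|^2=\sum_i h_{ii}^2+\sum_{i\neq j}|h_{ij}|^2$. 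Comparing the two expressions and cancelling $\sum_i h_{ii}^2=\sum_i\lambda_i^2$ leaves $\sum_{i\neq j}|h_{ij}|^2=0$, hence $h_{ij}=0$ whenever $i\neq j$, i.e. $H$ is diagonal.

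I do not expect a genuine obstacle here. The only point requiring a word of care is the passage from ``equal as sorted vectors'' to ``equal as multisets'', which is trivial since sorting is a bijection, and it is precisely this step that lets the diagonal sum of squares be replaced by the eigenvalue sum of squares in the comparison. An alternative route is to invoke the equality case of the Schur majorization theorem $D(H)\trianglelefteq Spec~H$ directly --- essentially the content of Corollary 4.3.34 and Theorem 4.3.45 of \cite{Horn}, as the authors intend --- but the self-contained Frobenius-norm computation above is shorter and does not use the full strength of the majorization result.
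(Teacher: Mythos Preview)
Your argument is correct. The forward direction is indeed immediate, and the converse via $tr(H^2)$ is clean: the key identity $tr(H^2)=\sum_i\lambda_i^2=\sum_i h_{ii}^2+\sum_{i\neq j}|h_{ij}|^2$ combined with $\sum_i\lambda_i^2=\sum_i h_{ii}^2$ (which follows from multiset equality of diagonal entries and eigenvalues) forces all off-diagonal entries to vanish.

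As you already note, the paper does not give a self-contained proof but simply invokes Corollary~4.3.34 and Theorem~4.3.45 of \cite{Horn}, i.e.\ the equality case in Schur's majorization theorem $D(H)\trianglelefteq Spec~H$. Your Frobenius-norm route is genuinely different and more elementary: it bypasses the full majorization machinery and the analysis of doubly stochastic matrices that underlies the equality characterization in \cite{Horn}, replacing it with a single trace computation. The trade-off is that the Horn--Johnson approach situates the result within the broader Schur--Horn picture (which the paper uses anyway in the proof of the subsequent theorem), whereas your argument is self-contained but somewhat ad hoc. Either way, the statement is established.
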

\begin{theorem}
Let $\mathcal{D}(\mathcal{H})$ be the set of density operators on $\mathcal{H}$. Suppose that $\mathcal{C}(\mathcal{H})\subseteq\mathcal{D}(\mathcal{H})$ is a convex subset. If $\rho\in\mathcal{C}(\mathcal{H})$ has the maximal lexicographic spectrum among all other states, then $\rho$ is an extreme point of $\mathcal{C}(\mathcal{H})$.
\end{theorem}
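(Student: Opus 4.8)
The plan is to argue by contradiction: suppose $\rho$ has the maximal lexicographic spectrum in $\mathcal{C}(\mathcal{H})$ but is not an extreme point, so that $\rho = \tfrac12(\sigma_1 + \sigma_2)$ with $\sigma_1, \sigma_2 \in \mathcal{C}(\mathcal{H})$ and $\sigma_1 \neq \sigma_2$. The key idea is to compare the spectrum of $\rho$ with the \emph{diagonal} of $\rho$ written in an eigenbasis, and to use the Schur-type rigidity of Proposition~\ref{prp-dhshd}. First I would fix an orthonormal eigenbasis of $\rho$ so that, in this basis, $\rho$ is the diagonal matrix $\operatorname{diag}(r_1, \dots, r_d)$ with $r_1 \geq \cdots \geq r_d$, i.e. $Spec~\rho = D(\rho)$ in these coordinates. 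In the same basis write $\sigma_1 = (s^{(1)}_{ij})$ and $\sigma_2 = (s^{(2)}_{ij})$; then the diagonal entries satisfy $\tfrac12(s^{(1)}_{ii} + s^{(2)}_{ii}) = r_i$ for each $i$.

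The next step is to bring in Schur's Theorem for each summand. For $\sigma_1$, if we let $d_1, \dots, d_d$ denote its diagonal entries in this basis arranged decreasingly, Schur gives $(d_1, \dots, d_d) \unlhd Spec~\sigma_1$, hence by the remark in Section~\ref{se:preli} that dominance refines to lexicographic order, $(d_1,\dots,d_d) \leq Spec~\sigma_1 \leq Spec~\rho = (r_1,\dots,r_d)$, where the last inequality is the maximality hypothesis applied to $\sigma_1 \in \mathcal{C}(\mathcal{H})$. I would then want to promote the inequality on the \emph{sorted} diagonal to a statement about the \emph{unsorted} diagonal $(s^{(1)}_{11}, s^{(1)}_{22}, \dots)$ in the given eigenbasis ordering. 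The cleanest route is: since $r_1 \geq \cdots \geq r_d$ already and $\tfrac12(s^{(1)}_{ii}+s^{(2)}_{ii})=r_i$, an averaging/exchange argument forces each $s^{(1)}_{ii}=s^{(2)}_{ii}=r_i$. Concretely, if some diagonal entry of $\sigma_1$ differs from $r_i$, one produces a rearrangement of the diagonal that is lexicographically larger than $(r_1,\dots,r_d)$, contradicting $D(\sigma_1) \leq Spec~\rho$; the fact that $(r_i)$ is already decreasing is what makes this work. So both $\sigma_1$ and $\sigma_2$ have diagonal exactly $(r_1, \dots, r_d)$ in the chosen eigenbasis of $\rho$, i.e. $D(\sigma_1) = (r_1,\dots,r_d) = D(\sigma_2)$ as sorted vectors and also entrywise.

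Now I would close the argument: since $D(\sigma_1) \leq Spec~\sigma_1 \leq Spec~\rho = (r_1,\dots,r_d) = D(\sigma_1)$, all inequalities collapse and $D(\sigma_1) = Spec~\sigma_1$. By Proposition~\ref{prp-dhshd}, $\sigma_1$ is diagonal in this basis, hence $\sigma_1 = \operatorname{diag}(r_1,\dots,r_d) = \rho$, and likewise $\sigma_2 = \rho$, contradicting $\sigma_1 \neq \sigma_2$. Therefore $\rho$ is an extreme point. The main obstacle I anticipate is the middle step — rigorously passing from the Schur majorization inequality, which a priori only constrains the \emph{decreasingly sorted} diagonal, to pinning down every individual diagonal entry $s^{(1)}_{ii}$ in the eigenbasis order; this needs the convexity constraint $\tfrac12(s^{(1)}_{ii}+s^{(2)}_{ii})=r_i$ together with the fact that the target vector $(r_i)$ is itself sorted, and one must be slightly careful about ties among the $r_i$. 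Once that entrywise equality of diagonals is established, the appeal to Proposition~\ref{prp-dhshd} is immediate.
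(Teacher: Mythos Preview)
Your proposal is correct and follows essentially the same route as the paper: pass to an eigenbasis of $\rho$, apply Schur's Theorem to each summand, use the lex-maximality of $Spec~\rho$ to collapse the resulting chain of inequalities, and then invoke Proposition~\ref{prp-dhshd} to force $\sigma_1=\sigma_2=\rho$. The only tactical difference is in your ``main obstacle'': rather than an entrywise averaging/exchange argument, the paper writes the single chain $\lambda = p_1 D(U\sigma U^*)+p_2 D(U\tau U^*)\leq p_1\mu+p_2\eta\leq p_1\lambda+p_2\lambda=\lambda$ and reads off $\lambda=\mu=D(U\sigma U^*)$ (and likewise for $\tau$) directly from the forced equalities.
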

\begin{proof}
Suppose that there exist $\sigma$, $\tau\in\mathcal{C}(\mathcal{H})$ such that
$$\rho=p_1\sigma+p_2\tau,$$
where  $p_1$, $p_2\geq0$ and $p_1+p_2=1$.

Denote $\lambda=Spec~\rho$, $\mu=Spec ~\sigma$, $\eta=Spec~\tau$.
In the following, we don't distinguish between the spectrum and the diagonal matrix with diagonal entries consist of it.
Let  $U$ be the unitary matrix such that $U^*\lambda U=\rho$. Then we have
$$\lambda=p_1U\sigma U^*+p_2U \tau U^*.$$
Let $D(U\sigma U^*)$, $D(U\tau U^*)$ be the diagonal of $U\sigma U^*$ and $U\tau U^*$.
Then we have
$$\lambda=p_1D(U\sigma U^*)+p_2 D(U\tau U^*).$$
By Schur's Theorem we have that
$D(U\sigma U^*)\unlhd \mu$ and $ D(U\tau U^*)\unlhd \eta$. Moreover, since lexicographic order is a refinement of dominance order, we have $D(U\sigma U^*)\leq \mu$ and $ D(U\tau U^*)\leq \eta$.
 Since $\lambda$ has the maximal  lexicographic order, we have
\begin{align*}
\lambda &=p_1D(U\sigma U^*)+p_2 D(U\tau U^*)\leq p_1 \mu +p_2 \eta\\
&\leq p_1 \lambda+p_2\lambda\\
&=\lambda.
\end{align*}
Hence we have
\begin{equation}\label{eq-lamst}
\lambda=p_1D(U\sigma U^*)+p_2 D(U\tau U^*)=p_1 \mu +p_2 \eta.
\end{equation}
Since $D(U\sigma U^*)\leq \mu \leq \lambda$ and $ D(U\tau U^*)\leq \eta\leq \lambda$,
by (\ref{eq-lamst}) we should have that
 \begin{equation*}
 \lambda=\mu=D(U\sigma U^*)~\text{and} ~\lambda=\eta=D(U\tau U^*).
\end{equation*}
Hence by Proposition \ref{prp-dhshd} we have
 \begin{equation*}
 \lambda=\mu=U\sigma U^*~\text{and} ~\lambda=\eta=U\tau U^*,
\end{equation*}
which is equivalent to $\rho=\sigma=\tau$.
\end{proof}

Since $\mathcal{C}(\rho_A, \rho_B)\subseteq \mathcal{D}(\mathcal{H}_A\otimes\mathcal{H}_B)$ is convex, we have the following corollary.
\begin{corollary}
If $\rho\in\mathcal{C}(\rho_A, \rho_B)$ has maximal lexicographic spectrum, then it is an extreme point.
\end{corollary}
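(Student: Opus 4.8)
The plan is to prove that a state $\rho$ with maximal lexicographic spectrum in a convex subset $\mathcal{C}(\mathcal{H})\subseteq\mathcal{D}(\mathcal{H})$ must be extreme, by assuming a nontrivial convex decomposition $\rho=p_1\sigma+p_2\tau$ with $p_1,p_2>0$ and deriving $\sigma=\tau=\rho$. The central observation is that the decomposition statement can be pushed down to diagonal entries in the eigenbasis of $\rho$: if $U$ diagonalizes $\rho$ so that $U\rho U^*=\lambda$ (identifying $\lambda=\operatorname{Spec}\rho$ with the corresponding diagonal matrix), then reading off the diagonal of $\lambda=p_1 U\sigma U^*+p_2 U\tau U^*$ gives $\lambda = p_1 D(U\sigma U^*)+p_2 D(U\tau U^*)$, an honest vector identity with no off-diagonal interference.

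The key steps, in order, are: (i) fix the diagonalizing unitary $U$ and pass to diagonals; (ii) invoke Schur's Theorem to get $D(U\sigma U^*)\unlhd\operatorname{Spec}\sigma$ and $D(U\tau U^*)\unlhd\operatorname{Spec}\tau$, then use that dominance refines to lexicographic order so $D(U\sigma U^*)\le \operatorname{Spec}\sigma$ and $D(U\tau U^*)\le\operatorname{Spec}\tau$; (iii) use maximality of $\lambda$ in $\mathcal{C}(\mathcal{H})$ to get $\operatorname{Spec}\sigma\le\lambda$ and $\operatorname{Spec}\tau\le\lambda$; (iv) chain the inequalities $\lambda = p_1 D(U\sigma U^*)+p_2 D(U\tau U^*)\le p_1\operatorname{Spec}\sigma+p_2\operatorname{Spec}\tau\le p_1\lambda+p_2\lambda=\lambda$, forcing equality throughout; (v) deduce componentwise that $\lambda=\operatorname{Spec}\sigma=D(U\sigma U^*)$ and $\lambda=\operatorname{Spec}\tau=D(U\tau U^*)$; (vi) apply Proposition~\ref{prp-dhshd} to each of $U\sigma U^*$ and $U\tau U^*$ to conclude they are diagonal, hence equal to $\lambda$, hence $\sigma=\tau=\rho$.

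The one genuinely delicate point is step (iv)–(v): lexicographic order is only a total order, not a vector order compatible with addition in the naive sense, so I must be careful that "$x\le y$ and $z\le w$ with equal convex combinations" actually forces $x=y$ and $z=w$. The argument is that if, say, $\operatorname{Spec}\sigma < \lambda$ strictly, then at the first coordinate $i$ where they differ we have $(\operatorname{Spec}\sigma)_i<\lambda_i$; combined with $D(U\sigma U^*)\le\operatorname{Spec}\sigma$ (which only makes the first discrepancy occur no later), the convex combination $p_1 D(U\sigma U^*)+p_2 D(U\tau U^*)$ would be strictly lexicographically below $p_1\lambda+p_2\lambda=\lambda$ at the corresponding coordinate, contradicting the equality in \eqref{eq-lamst}; one writes this out coordinate by coordinate, using $p_1>0$. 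This is a short but slightly fussy verification. Once equality of the diagonals and spectra is in hand, Proposition~\ref{prp-dhshd} does the rest mechanically, and the degenerate cases $p_1=0$ or $p_2=0$ are trivial, so the proof is essentially the one sketched above with the lexicographic bookkeeping of step (iv)–(v) being the only place requiring attention.
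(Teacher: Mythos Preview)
Your proposal is correct and follows essentially the same argument as the paper, which first proves the statement for an arbitrary convex subset $\mathcal{C}(\mathcal{H})\subseteq\mathcal{D}(\mathcal{H})$ and then obtains the corollary by observing that $\mathcal{C}(\rho_A,\rho_B)$ is convex. One minor remark: your worry in step (iv)--(v) is unnecessary, since the lexicographic order on $\mathbb{R}^n$ \emph{is} compatible with addition and positive scaling (it makes $\mathbb{R}^n$ a totally ordered vector space), so the inequality chain and the deduction of equalities go through directly, exactly as the paper writes them.
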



\section{Ranks of states in  $\mathcal{C}(\frac{1}{n}I_n, \frac{1}{m}I_m)$}

For $0< n\leq m$, write $m=np+r$ where $p\geq1$ and $0\leq r\leq n-1$.
In this section, we construct states with prescribed ranks in $\mathcal{C}(\frac{1}{n}I_n, \frac{1}{m}I_m)$ which generalizes the construction in \cite{BCI11}.

Suppose that $n\leq m$. Let $|0\rangle$,..., $|m-1\rangle$ denote the standard orthonormal basis of $\mathbb{C}^m$. We define the {\it generalized discrete
Weyl operators} $X,Z_n \in\mathcal{L}(\mathbb{C}^m)$ by
\begin{equation*}
X|i\rangle = |i + 1\rangle;~~ Z_n|i\rangle = \omega^i |i\rangle,
\end{equation*}
where~$\omega^n=1$, $i=0,1,\ldots,m-1$ and the addition is modulo $m$. If $n=m$, these are called the discrete Weyl operators \cite{BCI11}.

For $n\leq m$, the maximal entangled state of $\mathbb{C}^n\otimes\mathbb{C}^m$
are defined by
$$|\psi_{00}\rangle := \frac{1}{\sqrt{n}}\sum_{i=1}^n|i\rangle|i\rangle.$$
In $|i\rangle|i\rangle$ above, without of confusion the first and second $|i\rangle$ represent the standard orthonormal vectors of $\mathbb{C}^n$ and $\mathbb{C}^m$ respectively. Now  we let
$$|\psi_{ij}\rangle := (I_n\otimes X^i Z_n^j) |\psi_{00}\rangle,$$
where $i=0,1,\ldots,m-1$ and $j=0,1,\ldots,n-1$. The following proposition generalizes Lemma 5 of \cite{BCI11}.

\begin{proposition}\label{prp-psiort}
The vectors $|\psi_{ij}\rangle$, for $0\leq i\leq m-1$, $0\leq j \leq n-1$, form an orthonormal basis of $\mathbb{C}^n\otimes\mathbb{C}^m$.
\end{proposition}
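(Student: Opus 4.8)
The plan is to show directly that the $nm$ vectors $|\psi_{ij}\rangle$ are pairwise orthonormal; since $nm=\dim(\mathbb{C}^n\otimes\mathbb{C}^m)$, orthonormality alone forces them to span, hence to be a basis. First I would record a closed form for $|\psi_{ij}\rangle$. Writing $|\psi_{00}\rangle=\frac{1}{\sqrt n}\sum_{a}|a\rangle\otimes|a\rangle$ with the summation index $a$ ranging over the $n$ standard basis labels, and using $Z_n^j|a\rangle=\omega^{ja}|a\rangle$ together with $X^i|a\rangle=|a+i\rangle$ (the label in the second factor read modulo $m$), one obtains
\[
|\psi_{ij}\rangle=(I_n\otimes X^iZ_n^j)|\psi_{00}\rangle=\frac{1}{\sqrt n}\sum_{a}\omega^{ja}\,|a\rangle\otimes|a+i\rangle .
\]

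Next I would compute $\langle\psi_{ij}|\psi_{kl}\rangle$. Expanding the tensor products,
\[
\langle\psi_{ij}|\psi_{kl}\rangle=\frac1n\sum_{a}\sum_{b}\overline{\omega^{ja}}\,\omega^{lb}\,\langle a|b\rangle\,\langle a+i|b+k\rangle .
\]
The inner product $\langle a|b\rangle$ in $\mathbb{C}^n$ kills all terms with $a\neq b$, so the expression reduces to $\frac1n\sum_{a}\omega^{(l-j)a}\langle a+i|a+k\rangle$. For the surviving inner product in $\mathbb{C}^m$, $\langle a+i|a+k\rangle\neq0$ requires $a+i\equiv a+k\pmod m$, i.e.\ $i\equiv k\pmod m$; since $0\le i,k\le m-1$ this is the same as $i=k$, in which case $\langle a+i|a+k\rangle=1$ for every $a$. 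Hence $\langle\psi_{ij}|\psi_{kl}\rangle=\delta_{ik}\cdot\frac1n\sum_{a}\omega^{(l-j)a}$.

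Finally, I would evaluate the remaining character sum $\frac1n\sum_{a}\omega^{(l-j)a}$ by the geometric series (equivalently, orthogonality of the characters of $\mathbb{Z}/n\mathbb{Z}$): as $a$ runs over a complete residue system modulo $n$ and $\omega$ is a primitive $n$-th root of unity, the sum equals $1$ if $l\equiv j\pmod n$ and $0$ otherwise, which for $0\le j,l\le n-1$ is precisely $\delta_{jl}$. Combining the two steps gives $\langle\psi_{ij}|\psi_{kl}\rangle=\delta_{ik}\delta_{jl}$, so the $nm$ vectors are orthonormal and therefore an orthonormal basis of $\mathbb{C}^n\otimes\mathbb{C}^m$. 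There is no substantial obstacle here; the only points needing care are the two modular reductions — one must use the index ranges $0\le i,k\le m-1$ and $0\le j,l\le n-1$ to exclude spurious coincidences coming from wrap-around — and the (implicit but necessary) fact that $\omega$ is a \emph{primitive} $n$-th root of unity, so that the character sum vanishes off the diagonal; this is the intended reading, consistent with the $n=m$ case of the discrete Weyl operators of \cite{BCI11}.
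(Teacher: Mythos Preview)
Your proof is correct and follows essentially the same approach as the paper: a direct computation of $\langle\psi_{ij}|\psi_{kl}\rangle$ that reduces to the Kronecker delta $\delta_{ik}$ times the character sum $\frac{1}{n}\sum_a\omega^{(l-j)a}=\delta_{jl}$. The paper organizes the computation via the operator expression $\frac{1}{n}\sum_s\langle s|Z_n^{-j}X^{k-i}Z_n^l|s\rangle$ rather than first expanding $|\psi_{ij}\rangle$, but the content is the same; your explicit remark that $\omega$ must be a \emph{primitive} $n$-th root of unity is a useful clarification the paper leaves implicit.
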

\begin{proof}
For any pair of $|\psi_{ij}\rangle$ and $|\psi_{kl}\rangle$, we have
\begin{align*}
\langle\psi_{ij}|\psi_{kl}\rangle &= \langle\psi_{00}|
 (I_n\otimes Z_n^{-j}X^{-i}) (I_n\otimes X^k Z_n^l) |\psi_{00}\rangle\\
 &= \langle\psi_{00}| (I_n\otimes Z_n^{-j} X^{k-i} Z_n^{l}) |\psi_{00}\rangle\\
 &=\frac{1}{n}\sum_{s,s'=1}^{n}\langle ss|(I_n\otimes Z_n^{-j} X^{k-i} Z_n^{l}) |s's'\rangle\\
 &=\frac{1}{n}\sum_{s=1}^{n}\langle s| Z_n^{-j} X^{k-i} Z_n^{l}|s\rangle.
\end{align*}
Since the sum of the first $n$ diagonal entries of $Z_n^t$ and the diagonal of $X^t$ are zeroes for any integer $t\neq0$, we have that $\langle\psi_{ij}|\psi_{kl}\rangle=0$ if $i\neq k$ or $j\neq l$.
\end{proof}

\begin{theorem}\label{thm-rkmmn}
Suppose that $n\leq m$. Then for each $m\leq k\leq mn$ there exist states $\rho\in\mathcal{C}(\frac{1}{n}I_n, \frac{1}{m}I_m)$ with $rank~\rho=k$.
\end{theorem}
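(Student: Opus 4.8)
The plan is to construct, for each rank value $k$ with $m \le k \le mn$, an explicit state in $\mathcal{C}(\frac{1}{n}I_n,\frac{1}{m}I_m)$ supported on exactly $k$ of the basis vectors $|\psi_{ij}\rangle$ from Proposition~\ref{prp-psiort}. Since each $|\psi_{ij}\rangle = (I_n\otimes X^i Z_n^j)|\psi_{00}\rangle$ is a maximally entangled vector obtained from $|\psi_{00}\rangle$ by a local unitary on the $B$-side, its two margins are $\operatorname{tr}_B(|\psi_{ij}\rangle\langle\psi_{ij}|) = \frac{1}{n}I_n$ on $\mathcal{H}_A$ and some rank-$n$ projection (scaled by $1/n$) on an $n$-dimensional coordinate subspace of $\mathcal{H}_B=\mathbb{C}^m$. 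The key bookkeeping observation is that as $i$ ranges over $0,\dots,m-1$ with $j$ fixed (say $j=0$), the supports of the $B$-margins of $|\psi_{i0}\rangle$ are the cyclic shifts $\{i+1,\dots,i+n\}$ (mod $m$) of the window $\{1,\dots,n\}$, and averaging $|\psi_{i0}\rangle\langle\psi_{i0}|$ over all $m$ values of $i$ gives $B$-margin $\frac{1}{m}I_m$ (each coordinate is covered exactly $n$ times out of $m$ windows, with weight $\frac{1}{mn}\cdot n = \frac{1}{m}$), while the $A$-margin is trivially $\frac{1}{n}I_n$ for any convex combination.

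Concretely, I would proceed as follows. First, take the rank-$m$ state $\rho_0 := \frac{1}{m}\sum_{i=0}^{m-1} |\psi_{i0}\rangle\langle\psi_{i0}|$ and check directly that $\rho_0\in\mathcal{C}(\frac{1}{n}I_n,\frac{1}{m}I_m)$; this handles $k=m$ and is essentially the construction of~\cite{BCI11} generalized to $n\le m$. Second, for $m < k \le mn$, write $k = m + s$ with $1 \le s \le m(n-1)$, and perturb $\rho_0$ by redistributing weight onto $s$ additional basis vectors $|\psi_{ij}\rangle$ with $j\neq 0$, chosen so that the total contribution from all vectors sharing a given value of $i$ is kept fixed. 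The clean way: for a fixed $i$, the $n$ vectors $|\psi_{i0}\rangle,\dots,|\psi_{i,n-1}\rangle$ all have the same $B$-support (the window $i+\{1,\dots,n\}$), and $\frac{1}{n}\sum_{j=0}^{n-1}|\psi_{ij}\rangle\langle\psi_{ij}|$ equals $\frac{1}{n}$ times the projection onto that window — exactly what $|\psi_{i0}\rangle\langle\psi_{i0}|$ contributes to the margin. So I can replace the single term $|\psi_{i0}\rangle\langle\psi_{i0}|$ by any mixture $\sum_j q_{ij}|\psi_{ij}\rangle\langle\psi_{ij}|$ with $\sum_j q_{ij}=1$, $q_{ij}\ge 0$, without changing either margin. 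Choosing how many of the $q_{ij}$ are positive, across the $m$ values of $i$, lets me realize any total rank between $m$ (all weight on $j=0$) and $mn$ (all $q_{ij}=1/n$).

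For the rank count I would argue: because the $|\psi_{ij}\rangle$ are orthonormal, the rank of $\sum_{i,j} q_{ij}|\psi_{ij}\rangle\langle\psi_{ij}|$ is exactly the number of pairs $(i,j)$ with $q_{ij}>0$. Distributing a budget of $s$ "extra" active indices among the $m$ blocks — each block $i$ can absorb between $0$ and $n-1$ extras — is possible for every $0 \le s \le m(n-1)$ by a trivial greedy/pigeonhole argument, and one must keep each $q_{ij}$ strictly positive (e.g. split the block's unit weight as evenly as desired among its active $j$'s). This yields ranks $m, m+1, \dots, mn$, covering the full range. The only real subtlety — and the step I would treat most carefully — is confirming that the $B$-margin is genuinely unchanged under the block-mixing: this rests on the identity $\operatorname{tr}_A|\psi_{ij}\rangle\langle\psi_{ij}| = \operatorname{tr}_A|\psi_{i0}\rangle\langle\psi_{i0}|$ for all $j$, i.e. that $Z_n^j$ merely adds phases within the fixed window and so leaves the diagonal of the reduced state on $\mathbb{C}^m$ untouched; this is immediate from the calculation already performed in the proof of Proposition~\ref{prp-psiort} (the cross terms vanish and the diagonal of $Z_n^j$ restricted to the window is all ones). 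Everything else is routine verification of positivity, trace one, and the two partial-trace conditions.
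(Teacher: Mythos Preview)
Your proposal is correct and follows essentially the same approach as the paper: both use the orthonormal family $|\psi_{ij}\rangle$ from Proposition~\ref{prp-psiort}, observe that $\operatorname{tr}_B|\psi_{ij}\rangle\langle\psi_{ij}|=\frac{1}{n}I_n$ always while $\operatorname{tr}_A|\psi_{ij}\rangle\langle\psi_{ij}|$ depends only on $i$, and conclude that any convex combination with each ``row sum'' equal to $1/m$ lies in $\mathcal{C}(\frac{1}{n}I_n,\frac{1}{m}I_m)$ with rank equal to the number of nonzero coefficients. Your block-mixing description (replace $|\psi_{i0}\rangle\langle\psi_{i0}|$ by $\sum_j q_{ij}|\psi_{ij}\rangle\langle\psi_{ij}|$, $\sum_j q_{ij}=1$) is just the paper's row-sum condition rescaled by $1/m$.
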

\begin{proof}
Suppose that $\sum_{i=0}^{m-1}\sum_{j=0}^{n-1}\lambda_{i,j}=1$ and $\lambda_{i,j}\geq0$ for $i=0,1,\ldots,m-1$ and $j=0,1,\ldots,n-1$.
Let
\begin{equation}\label{eq-lamd}
\Lambda=\left(
                \begin{array}{cccc}
                  \lambda_{0,0} & \lambda_{0,1} & \cdots & \lambda_{0,n-1} \\
                  \lambda_{1,0} & \lambda_{1,1} & \cdots & \lambda_{1,n-1} \\
                  \vdots & \vdots &  & \vdots \\
                  \lambda_{m-1,0} & \lambda_{m-1,1} & \cdots & \lambda_{m-1,n-1}\\
                \end{array}
              \right).
\end{equation}
Denote the row vector of $\Lambda$ by $row(\Lambda)=(\mu_0,\mu_1,\ldots,\mu_{m-1})$ where $\mu_{i}=\sum_{j=0}^{n-1}\lambda_{i,j}$ for $i=0,1,\ldots,m-1$.

For $|\psi_{ij}\rangle$ discussed in Proposition \ref{prp-psiort}, let
$$\rho=\sum_{i=0}^{m-1}\sum_{j=0}^{n-1}
\lambda_{i,j}|\psi_{ij}\rangle\langle\psi_{ij}|.$$
Then $\rho$ is a state of the system $\mathbb{C}^n\otimes\mathbb{C}^{m}$.
Next, we find conditions when $\rho\in\mathcal{C}(\frac{1}{n}I_n, \frac{1}{m}I_m)$ with rank $k$ for $m\leq k\leq mn$.

For $|\psi_{ij}\rangle$, we have
\begin{align*}
tr_{B} (|\psi_{ij}\rangle\langle\psi_{ij}|)&=tr_{B} \left(\frac{1}{n}\sum_{s,s'=0}^{n-1}
|s\rangle\langle s'|\otimes X^{i}Z_{n}^{j}(|s\rangle\langle s'|)Z^{-j}_nX^{-i}\right)\\
&=\frac{1}{n}I_n.
\end{align*}
Hence we have
\begin{equation}\label{eq-trb}
tr_{B} (\rho)=\sum_{i=0}^{m-1}\sum_{j=0}^{n-1}
\lambda_{i,j}tr_{B}\left(|\psi_{ij}\rangle\langle\psi_{ij}|\right)=\frac{1}{n}I_n.
\end{equation}

Since $Z_n^{j}|s\rangle=\omega^{j}|s\rangle$, we have
$$Z_n^{j}|s\rangle\langle s|Z_n^{-j}=\omega^{j}\omega^{-j}|s\rangle\langle s|=|s\rangle\langle s|.$$
Then we have
\begin{align*}
tr_{A} (|\psi_{ij}\rangle\langle\psi_{ij}|)&=tr_{A} \left(\frac{1}{n}\sum_{s,s'=0}^{n-1}
|s\rangle\langle s'|\otimes X^{i}Z_{n}^{j}(|s\rangle\langle s'|)Z^{-j}_nX^{-i}\right)\\
&=\frac{1}{n}\sum_{s=0}^{n-1}
X^{i}Z_{n}^{j}(|s\rangle\langle s|)Z^{-j}_nX^{-i}\\
&=\frac{1}{n}X^{i}\left(\sum_{s=0}^{n-1}
|s\rangle\langle s|\right)X^{-i}.
\end{align*}
Let $P_{ij}=tr_{A} (|\psi_{ij}\rangle\langle\psi_{ij}|)$. Then for $0\leq i\leq m-1$ we have $P_{i,0}=P_{i,1}=\cdots=P_{i,n-1}$ which denoted by $P_i$. Hence we have
\begin{align*}
tr_{A} (\rho)&=\sum_{i=0}^{m-1}\sum_{j=0}^{n-1}
\lambda_{i,j} tr_{A}(|\psi_{ij}\rangle\langle\psi_{ij}|)=\left( \sum_{i=0}^{m-1}\sum_{j=0}^{n-1}
\lambda_{i,j}P_{ij}\right)\\
&= \sum_{i=0}^{m-1}P_{i}\left(\sum_{j=0}^{n-1}
\lambda_{i,j}\right)\\
&= \sum_{i=0}^{m-1}\mu_{i}P_{i},
\end{align*}
where $\mu_i=\sum_{j=0}^{n-1}\lambda_{i,j}$ is the sum of row $i$ of $\Lambda$. Note that
$\sum_{i=0}^{m-1}P_{i}=I_m$. If we let $\mu_i=\frac{1}{m}$ for $i=0,1,\ldots,m-1$, then we have
\begin{align}\label{eq-tra}
tr_{A} (\rho)&=\sum_{i=0}^{m-1}\mu_{i}P_{i}=\frac{1}{m} \sum_{i=0}^{m-1}P_{i}\nonumber\\
&=\frac{1}{m}I_m.
\end{align}

By (\ref{eq-trb}) and (\ref{eq-tra}) we have that if $row(\Lambda)=(\frac{1}{m},
\frac{1}{m},\ldots,\frac{1}{m})$, then $\rho\in\mathcal{C}(\frac{1}{n}I_n, \frac{1}{m}I_m)$ where $\Lambda$ is defined in (\ref{eq-lamd}).
Since the number of nonzero entries of $\Lambda$ is the rank of $\rho$, for $m\leq k\leq mn$ it is not hard to find  $k$ nonzero entries such that $row(\Lambda)=(\frac{1}{m},
\frac{1}{m},\ldots,\frac{1}{m})$.
\end{proof}

\begin{theorem}\label{thm-rkpm}
Suppose that $n\leq m$ and $n|m$. Then for each $\frac{m}{n}\leq k\leq m$ there exist states $\rho\in\mathcal{C}(\frac{1}{n}I_n, \frac{1}{m}I_m)$ with $rank~\rho=k$.
\end{theorem}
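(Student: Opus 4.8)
The plan is to follow the proof of Theorem \ref{thm-rkmmn} but to use the divisibility hypothesis $n\mid m$ to support the weight matrix $\Lambda$ on fewer than $m$ rows. Write $p=m/n\geq 1$, so that by Proposition \ref{prp-rankab} the value $p=\lceil m/n\rceil$ is already the least possible rank. Keeping the notation of Theorem \ref{thm-rkmmn}, recall that for $\rho=\sum_{i,j}\lambda_{i,j}|\psi_{ij}\rangle\langle\psi_{ij}|$ with $\Lambda=(\lambda_{i,j})$ a probability matrix one has $tr_B(\rho)=\frac1n I_n$ automatically, while $tr_A(\rho)=\sum_{i=0}^{m-1}\mu_i P_i$ with $\mu_i=\sum_j\lambda_{i,j}$ the $i$-th row sum and $P_i=\frac1n X^i\big(\sum_{s=0}^{n-1}|s\rangle\langle s|\big)X^{-i}$. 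The key observation is that the indices $i\in S:=\{0,n,2n,\dots,(p-1)n\}$ pick out $p$ of these operators whose supports are the consecutive blocks $\{0,\dots,n-1\},\{n,\dots,2n-1\},\dots,\{m-n,\dots,m-1\}$; since $n\mid m$ these blocks partition $\{0,\dots,m-1\}$, so $\sum_{i\in S}P_i=\frac1n I_m$.

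Given this, fix $k$ with $m/n\le k\le m$, i.e. $p\le k\le pn$. Choose integers $t_0,\dots,t_{p-1}$ with $1\le t_a\le n$ for all $a$ and $\sum_{a=0}^{p-1}t_a=k$, which is possible because as the $t_a$ range over $\{1,\dots,n\}$ their sum takes every integer value in $[p,pn]$. Define $\Lambda$ by putting, in the row indexed by $an$, exactly $t_a$ positive entries summing to $1/p$, and putting $0$ in every row whose index is not in $S$; then $\Lambda$ is a probability matrix and $\mu_i=\frac1p$ for $i\in S$, $\mu_i=0$ otherwise. Set $\rho=\sum_{i\in S}\sum_{j=0}^{n-1}\lambda_{i,j}|\psi_{ij}\rangle\langle\psi_{ij}|$.

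It then remains to verify the three properties. First, $tr_B(\rho)=\frac1n I_n$ by the computation in Theorem \ref{thm-rkmmn}, since $\Lambda$ is still a probability matrix. Second, $tr_A(\rho)=\sum_{i\in S}\mu_i P_i=\frac1p\sum_{i\in S}P_i=\frac1p\cdot\frac1n I_m=\frac1m I_m$ by the block decomposition above, so $\rho\in\mathcal{C}(\frac1n I_n,\frac1m I_m)$. Third, the vectors $|\psi_{ij}\rangle$ are orthonormal by Proposition \ref{prp-psiort}, so $rank~\rho$ equals the number of nonzero entries of $\Lambda$, namely $\sum_a t_a=k$.

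The argument is essentially bookkeeping once the block structure is identified; the one place that genuinely uses the hypothesis $n\mid m$ — and the step I would double-check most carefully — is that the $p$ shifted projections attached to the index set $S$ really do partition the standard basis of $\mathbb{C}^m$ and hence sum to $I_m$. This is exactly what allows the constraint $tr_A(\rho)=\frac1m I_m$ to be met with as few as $p=m/n$ nonzero weights, breaking below the rank-$m$ floor forced by the construction in Theorem \ref{thm-rkmmn}.
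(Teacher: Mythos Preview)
Your proof is correct and follows essentially the same approach as the paper's: both restrict the orthonormal family $|\psi_{ij}\rangle$ to the indices $i\in\{0,n,2n,\dots,(p-1)n\}$, observe that $\sum_{a=0}^{p-1}P_{an}=\frac{1}{n}I_m$ because $n\mid m$, and then choose a $p\times n$ weight array with row sums $1/p$ and exactly $k$ nonzero entries. The only difference is packaging---the paper introduces a fresh $p\times n$ matrix $T$ while you describe the same data as a sparsely supported $m\times n$ matrix $\Lambda$---and you are slightly more explicit about the pigeonhole argument producing the $t_a$.
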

\begin{proof}
Let $p=n|m$. Suppose that $\sum_{i=0}^{p-1}\sum_{j=0}^{n-1}\tau_{i,j}=1$ and $\tau_{i,j}\geq0$ for $i=0,1,\ldots,p-1$ and $j=0,1,\ldots,n-1$.
Let
\begin{equation}\label{eq-T}
T=\left(
                \begin{array}{cccc}
                  \tau_{0,0} & \tau_{0,1} & \cdots & \tau_{0,n-1} \\
                  \tau_{1,0} & \tau_{1,1} & \cdots & \tau_{1,n-1} \\
                  \vdots & \vdots &  & \vdots \\
                  \tau_{p-1,0} & \tau_{p-1,1} & \cdots & \tau_{p-1,n-1}\\
                \end{array}
              \right).
\end{equation}
Denote the row vector of $T$ by $row(T)=(\nu_0,\nu_1,\ldots,\nu_{p-1})$ where $\nu_{i}=\sum_{j=0}^{n-1}\tau_{i,j}$ for $i=0,1,\ldots,p-1$.

For $|\psi_{ij}\rangle$ discussed in Proposition \ref{prp-psiort}, let
$$\rho=\sum_{i=0}^{p-1}\sum_{j=0}^{n-1}
\tau_{i,j}|\psi_{in,j}\rangle\langle\psi_{in,j}|.$$
Then $\rho$ is a state of the system $\mathbb{C}^n\otimes\mathbb{C}^{m}$.
Next, we find conditions when $\rho\in\mathcal{C}(\frac{1}{n}I_n, \frac{1}{m}I_m)$ with rank $k$ for $p\leq k\leq m$.

Just as the proof of Theorem \ref{thm-rkmmn} we have
\begin{align*}
tr_{A} (\rho)&=\sum_{i=0}^{p-1}\sum_{j=0}^{n-1}
\tau_{i,j} tr_{A}(|\psi_{in,j}\rangle\langle\psi_{in,j}|)=\left( \sum_{i=0}^{p-1}\sum_{j=0}^{n-1}
\tau_{i,j}P_{in,j}\right)\\
&= \sum_{i=0}^{p-1}P_{in}\left(\sum_{j=0}^{n-1}
\tau_{i,j}\right)\\
&= \sum_{i=0}^{p-1}\nu_{i}P_{in},
\end{align*}
where $P_{in,j}=tr_{A} (|\psi_{in,j}\rangle\langle\psi_{in,j}|)$ ($j=0,1,\ldots,n-1$) are equal and denoted by $P_{in}$ and $\nu_i=\sum_{j=0}^{n-1}\tau_{i,j}$ is the sum of row $i$ of $T$. Observe that
$\sum_{i=0}^{p-1}P_{in}=\frac{1}{n}I_m$, if we let $\nu_i=\frac{1}{p}=\frac{n}{m}$ for $i=0,1,\ldots,p-1$, then we have
\begin{align}\label{eq-tra2}
tr_{A} (\rho)&=\sum_{i=0}^{m-1}\nu_{i}P_{i}=\frac{n}{m} \sum_{i=0}^{m-1}P_{i}\nonumber\\
&=\frac{1}{m}I_m.
\end{align}

Since $tr_{B} (\rho)=\frac{1}{n}I_n$, by (\ref{eq-tra2}) we have that if $row(T)=(\frac{1}{p},
\frac{1}{p},\ldots,\frac{1}{p})$, then $\rho\in\mathcal{C}(\frac{1}{n}I_n, \frac{1}{m}I_m)$ where $T$ is defined in (\ref{eq-T}).
Since the number of nonzero entries of $T$ is the rank of $\rho$, for $p\leq k\leq m$ it is not hard to find  $k$ nonzero entries such that $row(T)=(\frac{1}{p},
\frac{1}{p},\ldots,\frac{1}{p})$.
\end{proof}

\begin{remark}\label{re-attain}
From Theorem \ref{thm-rkpm} we can see that the lower bound of Proposition \ref{prp-rankab} is attainable. Combined with Theorem \ref{thm-rkmmn}, if $n|m$ we have that for each $\frac{m}{n}\leq k\leq mn$ there exists $\rho\in\mathcal{C}(\frac{1}{n}I_n, \frac{1}{m}I_m)$ with $rank~\rho=k$. Generally, it has been shown that there exists $\rho\in\mathcal{C}(\rho_A,\rho_B)$ with rank $k$ if and only if $r\leq k\leq k_A k_B$ where
$r$ is the lowest rank of states in $\mathcal{C}(\rho_A,\rho_B)$ \cite{CKLi17}.

For $n\nmid m$, by the discussion in Example \ref{exp-2} we have that if $m=n+1$ then there exist states in $\mathcal{C}(\frac{1}{n}I_n, \frac{1}{n+1}I_{n+1})$ with rank $\lceil \frac{n+1}{n}\rceil=2$. Thus, the lower bound of Proposition \ref{prp-rankab} is also attainable. By Theorem 2.2 of \cite{CKLi17} we have that there exist states in $\mathcal{C}(\frac{1}{n}I_n, \frac{1}{n+1}I_{n+1})$ with ranks from 2 to $n(n+1)$.
When $n\nmid m$ it is interesting to give the construction of states with ranks from
$\lceil \frac{m}{n}\rceil$ to $mn$. Recently, in \cite{BLRR18} the authors discussed the construction of locally maximally entangled state of  multipart quantum systems. By their results, we can decide whether there exist states with spectra $(\frac{1}{k},\frac{1}{k},...,\frac{1}{k})$ in $\mathcal{C}(\frac{1}{n}I_n,\frac{1}{m}I_m)$ where $1\leq k\leq mn$. When $k=2$, they gave an explicit construction of such states.

Suppose that $n|m$ and $p=\frac{m}{n}$. Then in Theorem \ref{thm-maxlex} we have $k=m$, $a=p$ and $b=1$. If $\nu$ is the partition of strip type derived from $\lambda=(p^n)$ and $\mu=(1^m)$,  then we have $\nu=(n^p)$. Thus the maximal lexicographic spectrum for states in $\mathcal{C}(\frac{1}{n}I_n,\frac{1}{m}I_m)$ is $\nu/m=(\frac{1}{p},\frac{1}{p},...,\frac{1}{p})$. So the rank of these states is $p$.
Comparing with Proposition \ref{prp-rankab}, when $n|m$ we can see that the rank of states with maximal lexicographic spectrum  is minimal in $\mathcal{C}(\frac{1}{n}I_n,\frac{1}{m}I_m)$. Thus if $n|m$ then Klyachko's conjecture is true for states in $\mathcal{C}(\frac{1}{n}I_n,\frac{1}{m}I_m)$. In Theorem \ref{thm-rkpm} we give a construction of such states.
\end{remark}

%

The geometric complexity theory program is an approach to separate algebraic complexity classes.  Rectangular Kronecker coefficients play an important role in geometric complexity theory \cite{BCI11, Iken17}. For example, it can be used to prove the lower bounds of determinantal complexity. By the construction in Theorem \ref{thm-rkmmn} and \ref{thm-rkpm} and the proof of  main results in \cite{BCI11}, we can get nonzero stretched Kronecker coefficients for a pair of different rectangular partitions. For example, just as Theorem 1 in \cite{BCI11} we have the following corollary.
\begin{corollary}
Suppose that $n|m$, $p=\frac{m}{n}$ and $a \in \mathbb{N}$. Let $\lambda=(a^m)$ and $\mu=((pa)^n)$. For each partition $\nu\vdash ma$ if there exists a $p\times n$ nonnegative matrix $A$ with constant row sum $na$ such that its nonzero entries consist of all parts of $\nu$, then there exists a  stretching factor $k\in \mathbb{N}$ such that $g(k\lambda,k\mu;k\nu)\neq0$.
\end{corollary}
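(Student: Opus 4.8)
The plan is to exhibit an explicit state $\rho \in \mathcal{C}(\frac{1}{n}I_n, \frac{1}{m}I_m)$ whose rank equals $l(\nu)$, the number of parts of $\nu$; then the correspondence between spectra of density operators and nonzero Kronecker coefficients (Theorem 2.3 of \cite{Chris07}, recalled in Section \ref{se:preli}) will force a stretching factor $k$ with $g(k\lambda, k\mu; k\nu) \neq 0$. Concretely, since $n \mid m$ and $p = m/n$, I would apply Theorem \ref{thm-rkpm} with the roles of the partitions adjusted: the margins $\frac{1}{n}I_n$ and $\frac{1}{m}I_m$ are exactly the normalizations of $\bar\lambda$ and $\bar\mu$ for $\lambda = (a^m)$, $\mu = ((pa)^n)$ once one notes $\bar\lambda = (\frac{1}{m},\dots,\frac{1}{m})$ has $m$ parts and $\bar\mu = (\frac{1}{n},\dots,\frac{1}{n})$ has $n$ parts. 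Wait — here $\lambda$ has $m$ parts and $\mu$ has $n$ parts, so the subsystem dimensions are $m$ and $n$, and the relevant convex set is $\mathcal{C}(\frac{1}{m}I_m, \frac{1}{n}I_n) = \mathcal{C}(\frac{1}{n}I_n, \frac{1}{m}I_m)$ up to swapping tensor factors. So I may directly invoke the constructions of Theorems \ref{thm-rkmmn} and \ref{thm-rkpm}.

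The key steps, in order. First, fix the $p \times n$ nonnegative matrix $A$ with constant row sum $na$ whose nonzero entries are the parts of $\nu$. Normalize: set $\tau_{i,j} = A_{i,j}/(ma)$, so that $\sum_{i,j}\tau_{i,j} = p \cdot na/(ma) = 1$ and each row sum of the normalized matrix $T$ is $na/(ma) = 1/p$. This is precisely the hypothesis $\mathrm{row}(T) = (\frac{1}{p},\dots,\frac{1}{p})$ in Theorem \ref{thm-rkpm}. Second, form the state $\rho = \sum_{i=0}^{p-1}\sum_{j=0}^{n-1}\tau_{i,j}\,|\psi_{in,j}\rangle\langle\psi_{in,j}|$ using the generalized discrete Weyl operators on $\mathbb{C}^n \otimes \mathbb{C}^m$; by Proposition \ref{prp-psiort} the $|\psi_{in,j}\rangle$ are orthonormal, so $\mathrm{rank}\,\rho$ equals the number of nonzero $\tau_{i,j}$, which by construction of $A$ is $l(\nu)$, and the nonzero eigenvalues of $\rho$ are exactly the numbers $\nu_t/(ma)$, i.e. $Spec\,\rho = \overline{\nu} = \nu/(ma)$ (padded with zeros). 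Third, by the computation in the proof of Theorem \ref{thm-rkpm}, $tr_B(\rho) = \frac{1}{n}I_n$ and $tr_A(\rho) = \frac{1}{m}I_m$, so $\rho \in \mathcal{C}(\frac{1}{n}I_n, \frac{1}{m}I_m)$. Hence $(Spec\,\rho_A, Spec\,\rho_B, Spec\,\rho_{AB}) = (\bar\mu, \bar\lambda, \bar\nu)$ is an admissible rational spectral triple; by Theorem 2.3 of \cite{Chris07} there is an integer $k > 0$ with $g(k\bar\lambda, k\bar\mu; k\bar\nu) \neq 0$. Fourth, choose $k$ a multiple of $ma$ so that $k\bar\lambda, k\bar\mu, k\bar\nu$ are integer multiples of $\lambda, \mu, \nu$ respectively; absorbing the factor shows $g(k'\lambda, k'\mu; k'\nu) \neq 0$ for the appropriate $k' \in \mathbb{N}$, which is the claim (after renaming $k' \to k$). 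Apply Proposition \ref{prp-trans} only if one needs to switch between $\lambda \leftrightarrow \mu$ conventions.

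The main obstacle is bookkeeping rather than a genuine difficulty: one must track carefully which partition plays the role of the $A$-margin versus the $B$-margin (the corollary writes $\lambda = (a^m)$ with $m$ parts but the ambient space in Theorem \ref{thm-rkpm} is $\mathbb{C}^n \otimes \mathbb{C}^m$ with the $n$-part margin on the $A$ side), and one must verify that the normalization constant $ma$ is compatible with the common denominator $n_{AB}$ appearing in Theorem 2.3 of \cite{Chris07}, so that the stretching factor $k$ can indeed be taken a multiple of $ma$. A secondary point is to confirm that the hypothesis "$A$ has constant row sum $na$" is exactly what makes $tr_A(\rho)$ come out uniform — this follows verbatim from the $\sum_i \nu_i P_{in}$ computation in the proof of Theorem \ref{thm-rkpm} together with $\sum_i P_{in} = \frac{1}{n}I_m$, so no new estimate is needed.
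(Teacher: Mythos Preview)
Your proposal is correct and is precisely the argument the paper has in mind: the corollary is stated without proof, with the surrounding text indicating it follows ``just as Theorem~1 in \cite{BCI11}'' from the state construction of Theorem~\ref{thm-rkpm} together with the spectra--Kronecker correspondence of \cite{Chris07}. The only point to tighten is your step four: Theorem~2.3 of \cite{Chris07} hands you \emph{some} integer $M$ rather than letting you choose it, so to pass from $g(M\bar\lambda,M\bar\mu;M\bar\nu)\neq 0$ to $g(k\lambda,k\mu;k\nu)\neq 0$ you should explicitly invoke the semigroup property of Kronecker coefficients (also in \cite{Chris07}) to replace $M$ by $(ma)M$, after which $(ma)M\bar\lambda=M\lambda$, etc., and $k=M$ works.
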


\section*{Acknowledgments}
We would like to thank the referee for many helpful comments and suggestions.


\bibliographystyle{amsplain}


%

\end{document}